\documentclass[11pt,reqno]{amsart}

\usepackage{amsmath,amssymb,amsfonts,amsthm,mathtools}
\usepackage{amscd}
\usepackage{enumitem}
\usepackage{microtype}
\usepackage{array}
\usepackage{booktabs}
\usepackage{tikz}
\usepackage{tikz-cd}
\usetikzlibrary{arrows.meta,positioning,calc}
\usepackage[colorlinks=true,linkcolor=blue,citecolor=blue,urlcolor=blue]{hyperref}

\numberwithin{equation}{section}

\newtheorem{theorem}{Theorem}[section]
\newtheorem{proposition}[theorem]{Proposition}
\newtheorem{lemma}[theorem]{Lemma}
\newtheorem{corollary}[theorem]{Corollary}

\theoremstyle{definition}
\newtheorem{definition}[theorem]{Definition}

\theoremstyle{remark}
\newtheorem{remark}[theorem]{Remark}

\newcommand{\modu}{\mathsf{mod}}

\newcommand{\proj}{\mathsf{proj}}
\newcommand{\add}{\mathsf{add}}
\newcommand{\Sub}{\mathsf{Sub}}
\newcommand{\rad}{\operatorname{rad}}

\newcommand{\Hom}{\operatorname{Hom}}

\newcommand{\pd}{\operatorname{pd}}
\newcommand{\findim}{\operatorname{findim}}
\newcommand{\Findim}{\operatorname{Findim}}
\newcommand{\dell}{\operatorname{dell}}
\newcommand{\ddell}{\operatorname{ddell}}

\newcommand{\op}{\mathrm{op}}
\newcommand{\cG}{\mathcal G}
\newcommand{\cT}{\mathcal T}
\newcommand{\cC}{\mathcal C}
\newcommand{\cE}{\mathcal E}

\newcommand{\Om}{\Omega}
\newcommand{\stmod}{\underline{\modu}}

\title[Derived Delooping Levels of One-Point Extensions and Finitistic Dimensions]
{Derived Delooping Levels of One-Point Extensions and Finitistic Dimensions}

\author{Hanpeng Gao}
\address{School of Mathematical Sciences, Anhui University, Hefei 230601, P. R. China}
\email{hpgao@ahu.edu.cn}

\author{Dajun Liu$^{*}$}
\address{School of Mathematics and Physics, Anhui Polytechnic University, Wuhu, China}
\email{liudajun@ahpu.edu.cn}

\author{Ruomu Xu}
\address{School of Mathematical Sciences, Anhui University, Hefei 230601, P. R. China}
\email{8971638129@163.com}

\thanks{*Corresponding author}
\date{}

\subjclass[2020]{Primary 16E05; Secondary 16E10, 16G20}
\keywords{derived delooping level, finitistic dimension,  one-point extension}

\begin{document}

\begin{abstract}
	Let $B$ be a finite-dimensional algebra, $M\in\modu B$, and
	$A=B[M]$ the one-point extension. We give a stable two-step extension
	criterion for the new simple $A$-module to have derived delooping level at
	most one. For a concrete algebra $B$ with a self-injective Nakayama quotient
	$R$, we determine the derived and classical delooping levels of $B[M]$ for
	every nonzero $M\in\modu R$. In particular, we obtain an explicit algebra
	$A$ satisfying
	\[
	\Findim(A^{\mathrm{op}})=1<2=\ddell A=\dell A,
	\]
	which gives a negative answer to a question of Guo and Igusa.
\end{abstract}

\maketitle

\section{Introduction}

The delooping level measures when a syzygy is a direct summand, in the stable module category, of a syzygy of higher order. G\'elinas introduced this invariant and proved that it bounds the big finitistic dimension of the opposite algebra \cite{Gelinas}. Guo and Igusa introduced the derived delooping level and established
\begin{equation}\label{eq:intro-chain}
  \Findim(\Lambda^{\op})\le \ddell\Lambda\le \dell\Lambda
\end{equation}
for every finite-dimensional algebra $\Lambda$ \cite[Theorem 2.27]{GuoIgusa}. They asked whether the first inequality is always an equality \cite[Question 5.1(2)]{GuoIgusa}. Their question is explicitly
\begin{equation}\label{eq:intro-chain1}
   \ddell\Lambda\stackrel{?}{=}\Findim(\Lambda^{\op}).
\end{equation}
The opposite algebra in this formula is essential. In particular, throughout this paper $\findim$ denotes the little finitistic dimension and $\Findim$ the big finitistic dimension.

The derived invariant allows finite exact sequences whose terms have controlled higher delooping level, and therefore detects extension data that the classical delooping level does not record. Examples with $\ddell\Lambda<\dell\Lambda$ are already known; large gaps involving the classical delooping level are studied in \cite{BarriosLanzilottaMata}, and further calculations for serial quiver algebras appear in \cite{GuoSerial}. The remaining problem is whether the left inequality in \eqref{eq:intro-chain} can be strict.

Our first step is a stable reformulation of the condition $\ddell_\Lambda X\le 1$. Put
\[
  \cG_2(\Lambda)=\add\bigl(\proj\Lambda\cup \Om_\Lambda^2(\modu\Lambda)\bigr).
\]
If $\mathcal X,\mathcal Y$ are subcategories of a module category, write $\mathcal X*\mathcal Y$ for the full subcategory of middle terms of short exact sequences with submodule in $\mathcal X$ and quotient in $\mathcal Y$. We prove
\[ \ddell_\Lambda X\le 1
 \quad\Longleftrightarrow\quad
 \Om_\Lambda X\oplus Q\in \cG_2(\Lambda)*\cG_2(\Lambda)
 \text{ for some }Q\in\proj\Lambda.\]

Now let $B$ be a finite-dimensional algebra, $M\in\modu B$, and consider the one-point extension
\[
 A=B[M]=
 \begin{pmatrix}
 k&M\\0&B
 \end{pmatrix}.
\]
Set
\[
  \cT_M=\Sub(B\oplus M),
  \qquad
  \cC_M=\add\bigl(\proj B\cup\Om_B\cT_M\bigr),
\]
where $\Sub(B\oplus M)$ denotes the submodules of finite direct sums of copies of $B\oplus M$. If $S_0$ is the new simple module, then
\[
 \ddell_A S_0\le 1
 \quad\Longleftrightarrow\quad
 M\oplus Q\in \cC_M*\cC_M
 \text{ for some }Q\in\proj B.\]
We apply this criterion to the bound quiver algebra
\begin{equation*}
 B=kQ_B/\langle bdbd,dbdb,dbdc\rangle,
 \qquad Q_B:
 \begin{tikzpicture}[baseline=-2pt,>=Stealth,scale=.9]
  \node (1) at (0,0) {$1$};
  \node (2) at (2,0) {$2.$};
  \node (3) at (-1.5,0) {$3$};
  \draw[->,bend left=18] (1) to node[above] {$b$} (2);
  \draw[->,bend left=18] (2) to node[below] {$d$} (1);
  \draw[->] (1) to node[above] {$c$} (3);
 \end{tikzpicture}
\end{equation*}
 Let $R=B/Be_3B$. Then $R$ is the self-injective Nakayama algebra on the oriented two-cycle
with radical fourth power zero. For every nonzero $M\in\modu R$, regarded
as a $B$-module, we determine the relevant homological invariants of the
one-point extension $B[M]$.

Write
\[
\mathcal E=\add(S_1\oplus S_2\oplus U_{2,2}).
\]
Our calculation gives
\[
\Findim(B[M]^{\mathrm{op}})=1,
\]
and
\[
\ddell B[M]
=
\begin{cases}
	1,& Mbdb=0,\\
	2,& Mbdb\neq 0,
\end{cases}
\qquad
\dell B[M]
=
\begin{cases}
	1,& M\in\mathcal E,\\
	2,& M\notin\mathcal E.
\end{cases}
\]
Thus, in this family, the derived delooping level is detected by the action
of the single path $bdb$, whereas the classical delooping level is
controlled by membership in $\mathcal E$.
In particular, for $M=U_{1,4}$ the path $bdb$ acts nontrivially, and the
corresponding one-point extension $A=B[U_{1,4}]=kQ/\langle bdbd,dbdb,dbdc,ac,abdc\rangle$ satisfies
\[
\Findim(A^{\mathrm{op}})
=1<2=\ddell A=\dell A,
\] where $Q$ is the following quiver:
\[ 
\begin{tikzpicture}[baseline=-2pt,>=Stealth,scale=.9]
	\node (0) at (0,-1.5) {$0$};
	\node (1) at (0,0) {$1$};
	\node (2) at (2,0) {$2$};
	\node (3) at (-1.5,0) {$3$};
	\draw[->] (0) -- node[left] {$a$} (1);
	\draw[->,bend left=18] (1) to node[above] {$b$} (2);
	\draw[->,bend left=18] (2) to node[below] {$d$} (1);
	\draw[->] (1)to node[above] {$c$} (3);
\end{tikzpicture}.\]
Hence the first inequality in \eqref{eq:intro-chain} can be strict, so the equality in
\eqref{eq:intro-chain1} does not hold in general.

The paper is organized as follows. Section~\ref{sec:prelim} recalls the delooping and
derived delooping levels and proves the stable two-step criterion.
Section~\ref{sec:onepoint} specializes this criterion to one-point extensions and studies
the opposite finitistic dimension. Section~\ref{sec:classification} carries out the explicit
calculation for the algebra $B$ above and gives the resulting families of
examples.

\section{Preliminaries}\label{sec:prelim}

Throughout, $k$ is a field and all algebras are finite-dimensional $k$-algebras. Unless specified otherwise, modules are finitely generated right modules. For an algebra $\Lambda$, let $\modu\Lambda$ and $\proj\Lambda$ denote its module category and the full subcategory of finitely generated projective modules. The stable category $\stmod\Lambda$ is the quotient by morphisms factoring through projectives. We write $\Om_\Lambda X$ for the kernel of a projective cover of $X$.

\begin{definition}[Guo--Igusa]\label{def:dell}
For $r\ge1$ and $X\in\modu\Lambda$, set
\[
 r\text{-}\dell_\Lambda X
 =\inf\bigl\{n\ge0\mid \Om_\Lambda^nX\text{ is a retract of }
 \Om_\Lambda^{n+r}Y\text{ in }\stmod\Lambda
 \text{ for some }Y\in\modu\Lambda\bigr\}.
\]
Write $\dell_\Lambda X=1\text{-}\dell_\Lambda X$. The derived delooping level $\ddell_\Lambda X$ is the infimum of the integers $m\ge0$ for which there are $0\le n\le m$ and an exact sequence
\[
 0\longrightarrow C_n\longrightarrow C_{n-1}\longrightarrow\cdots
 \longrightarrow C_0\longrightarrow X\longrightarrow0
\]
such that
\[
 (i+1)\text{-}\dell_\Lambda C_i\le m-i
 \qquad (0\le i\le n).
\]
The infimum of the empty set is $\infty$. Finally, $\dell\Lambda$ and $\ddell\Lambda$ are the maxima of the corresponding invariants over the simple right $\Lambda$-modules.
\end{definition}

We use $\findim\Lambda$ for the supremum of the finite projective dimensions of finitely generated right modules, and $\Findim\Lambda$ for the corresponding supremum over all right modules. Thus $\Findim(\Lambda^{\op})$ concerns arbitrary left $\Lambda$-modules.

For $r\ge1$, define
\[
 \cG_r(\Lambda)=\add\bigl(\proj\Lambda\cup\{\Om_\Lambda^rX\mid X\in\modu\Lambda\}\bigr).
\]
For full subcategories $\mathcal U,\mathcal V\subseteq\modu\Lambda$, let $\mathcal U*\mathcal V$ consist of those modules $E$ for which there is a short exact sequence
\[
 0\longrightarrow U\longrightarrow E\longrightarrow V\longrightarrow0,
 \qquad U\in\mathcal U,\ V\in\mathcal V.
\]

\begin{lemma}\label{lem:stable-retract}
Let $X,Y\in\modu\Lambda$. The module $X$ is a retract of $Y$ in $\stmod\Lambda$ if and only if $X$ is a direct summand of $Y\oplus P$ for some $P\in\proj\Lambda$. Consequently, for $r\ge1$ and $n\ge0$,
\begin{equation}\label{eq:r-dell-G}
 r\text{-}\dell_\Lambda X\le n
 \quad\Longleftrightarrow\quad
 \Om_\Lambda^nX\in\cG_{n+r}(\Lambda).
\end{equation}
In particular,
\begin{align*}
 r\text{-}\dell_\Lambda X=0
 &\quad\Longleftrightarrow\quad X\in\cG_r(\Lambda),\\
 \dell_\Lambda X\le1
 &\quad\Longleftrightarrow\quad \Om_\Lambda X\in\cG_2(\Lambda).
\end{align*}
\end{lemma}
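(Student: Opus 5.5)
The plan has three steps: prove the retract characterization, deduce \eqref{eq:r-dell-G} from it, and read off the two special cases.

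\textbf{The retract characterization.} One direction is immediate. If $X\oplus X'\cong Y\oplus P$ with $P$ projective, then in $\stmod\Lambda$ we have $Y\cong Y\oplus P$. The inclusion $X\to Y\oplus P$ and the projection $Y\oplus P\to X$ then compose to the identity of $X$, so $X$ is a stable retract of $Y$.

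For the converse, suppose $f\colon X\to Y$ and $g\colon Y\to X$ satisfy $gf-1_X=h$ with $h$ factoring through a projective. Take a projective cover $\pi\colon P_0\to X$. Any map from $X$ factoring through a projective factors through $\pi$, so $h=s\pi$ for some $s\colon P_0\to X$... but we want the reverse, a factorization of $h$ as $X\to P\to X$. Since $h$ factors through some projective $P'$, and every map $P'\to X$ lifts along $\pi$, we may write $h=\pi t$ with $t\colon X\to P_0$.

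Now set
\[
 F=\begin{pmatrix}f\\ -t\end{pmatrix}\colon X\to Y\oplus P_0,
 \qquad
 G=(g,\ \pi)\colon Y\oplus P_0\to X .
\]
Then $GF=gf-\pi t=gf-h=1_X$, so $X$ is an honest direct summand of $Y\oplus P_0$. The only point needing care is the sign convention on $t$, which is checked by this computation.

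\textbf{The equivalence \eqref{eq:r-dell-G}.} Suppose $r\text{-}\dell_\Lambda X\le n$. Then there is some $m\le n$ with $\Om^m X$ a stable retract of $\Om^{m+r}Y$. To raise $m$ to $n$, apply the functor $\Om^{n-m}$, which is an additive endofunctor of $\stmod\Lambda$. It preserves retracts, so $\Om^nX$ is a stable retract of $\Om^{n+r}Y$. (Alternatively, work directly with genuine summands: $\Om$ commutes with finite direct sums and kills projectives, up to projective summands.) By the first part, $\Om^nX$ is a summand of $\Om^{n+r}Y\oplus P$, hence lies in $\cG_{n+r}(\Lambda)$.

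Conversely, suppose $\Om^nX$ is a summand of $\bigoplus_i\Om^{n+r}Y_i\oplus P$. Since $\bigoplus_i\Om^{n+r}Y_i\cong\Om^{n+r}(\bigoplus_i Y_i)$, the first part shows $\Om^nX$ is a stable retract of $\Om^{n+r}Y$ with $Y=\bigoplus_i Y_i$. Hence $r\text{-}\dell_\Lambda X\le n$.

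\textbf{The special cases.} Taking $n=0$ gives $r\text{-}\dell_\Lambda X=0$ if and only if $X\in\cG_r(\Lambda)$. Taking $r=1$, $n=1$ gives $\dell_\Lambda X\le1$ if and only if $\Om_\Lambda X\in\cG_2(\Lambda)$.

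\textbf{Main obstacle.} The only genuine step is the monotonicity in $n$: the definition asks for some $m\le n$, while \eqref{eq:r-dell-G} is phrased at level exactly $n$. This is handled by $\Om$ being a functor on $\stmod\Lambda$, as above.
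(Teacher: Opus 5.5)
Your proof is correct and takes essentially the same route as the paper: you split off the projective through which $gf-1_X$ factors to get an honest left inverse $X\to Y\oplus P$, and you pass between stable retracts and $\cG_{n+r}(\Lambda)$ using additivity of $\Om$ on $\stmod\Lambda$ together with $\bigoplus_i\Om^{n+r}Y_i\cong\Om^{n+r}(\bigoplus_iY_i)$. The detour through the projective cover $P_0$ is harmless but unnecessary; you can write $1_X-gf=ba$ through $P'$ directly, as the paper does. You should also delete the garbled aside about $h=s\pi$.
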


\begin{proof}
Suppose that $f:X\to Y$ and $g:Y\to X$ induce a retraction in the stable category. Write
\[
 1_X-gf=ba
\]
with $a:X\to P$ and $b:P\to X$ factoring through a finitely generated projective module $P$. Then
\[
 \binom{f}{a}:X\longrightarrow Y\oplus P
\]
has left inverse $(g\ b)$. The converse follows by applying the quotient functor to a splitting.

A finite direct sum of $r$th syzygies is, up to a projective summand, the $r$th syzygy of a finite direct sum. Hence the first assertion identifies stable retracts of $r$th syzygies with $\cG_r(\Lambda)$. If $r\text{-}\dell_\Lambda X=j\le n$, applying $\Om_\Lambda^{n-j}$ in the stable category to a retraction of $\Om_\Lambda^jX$ from $\Om_\Lambda^{j+r}Y$ gives the required retraction in degree $n$. The converse is immediate from the definition. This proves \eqref{eq:r-dell-G}, and the special cases follow.
\end{proof}

\begin{lemma}\label{lem:epi-projective}
Let $p:P_X\to X$ be a projective cover and let $f:P\to X$ be an epimorphism with $P$ finitely generated projective. Then there is a projective module $Q$ and an isomorphism $P\simeq P_X\oplus Q$ under which $f=(p\ 0)$. In particular,
\[
 \ker f\simeq\Om_\Lambda X\oplus Q.
\]
\end{lemma}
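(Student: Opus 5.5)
The plan is to use the lifting property of projectives twice and then compare two splittings. Since $P$ is projective and $p$ is an epimorphism, $f$ lifts through $p$: there is $\alpha:P\to P_X$ with $p\alpha=f$. Dually, since $P_X$ is projective and $f$ is an epimorphism, $p$ lifts through $f$: there is $\beta:P_X\to P$ with $f\beta=p$. Composing gives
\[
p\alpha\beta=f\beta=p .
\]

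The key step is to show that $\alpha\beta$ is an automorphism of $P_X$, and this is where the projective cover hypothesis enters. The endomorphism $\alpha\beta$ satisfies $p\circ\alpha\beta=p$, so $p\circ(1-\alpha\beta)=0$. Hence $\operatorname{im}(1-\alpha\beta)\subseteq\ker p\subseteq\rad P_X$. It follows that $\operatorname{im}(\alpha\beta)+\rad P_X=P_X$, and Nakayama's lemma then makes $\alpha\beta$ surjective. Since $P_X$ is finite-dimensional, a surjective endomorphism is bijective. Equivalently, one could use that $\ker p$ is superfluous in $P_X$. This is the only non-formal point; everything else is bookkeeping.

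Replacing $\alpha$ by $(\alpha\beta)^{-1}\alpha$, we may therefore assume $\alpha\beta=1_{P_X}$. Note that $p\circ(\alpha\beta)^{-1}=p$, since $p\alpha\beta=p$, so $p\alpha=f$ still holds after this replacement. Now $\beta$ is a split monomorphism, and we can write
\[
P=\beta(P_X)\oplus Q, \qquad Q=\ker\alpha .
\]
Under the isomorphism $P_X\oplus Q\to P$, $(x,q)\mapsto \beta x+q$, the map $f$ becomes
\[
(x,q)\mapsto f\beta x+fq=p(x)+p\alpha q=p(x),
\]
since $\alpha q=0$ for $q\in Q$. Thus $f$ corresponds to $(p\ 0)$. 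The summand $Q$ is projective as a direct summand of $P$, and it is finitely generated.

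Finally, the kernel of $(p\ 0):P_X\oplus Q\to X$ is $\ker p\oplus Q=\Om_\Lambda X\oplus Q$, which gives $\ker f\simeq\Om_\Lambda X\oplus Q$.
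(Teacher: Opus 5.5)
Your proof is correct and follows the paper's argument: lift in both directions, use the projective cover property to see that $\alpha\beta$ is invertible, normalize to $\alpha\beta=1_{P_X}$, and split off $Q=\ker\alpha$, on which $f$ vanishes. The differences are cosmetic. You spell out the Nakayama argument that the paper leaves implicit in ``since $p$ is a projective cover.'' You also normalize $\alpha$ on the left, whereas the paper replaces its $u$ (your $\beta$) by $u(vu)^{-1}$.
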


\begin{proof}
Projectivity gives maps $u:P_X\to P$ and $v:P\to P_X$ such that $fu=p$ and $pv=f$. Hence $pvu=p$. Since $p$ is a projective cover, $vu$ is invertible. Replacing $u$ by $u(vu)^{-1}$ gives $vu=1$, $fu=p$, and $f=pv$. Thus
\[
 P=u(P_X)\oplus\ker v,
\]
and $f$ vanishes on $Q:=\ker v$. Taking kernels proves the final assertion.
\end{proof}

\begin{theorem}\label{thm:ddell-one}
Let $X\in\modu\Lambda$. The following conditions are equivalent.
\begin{enumerate}[label=\textup{(\arabic*)}]
\item $\ddell_\Lambda X\le1$.
\item There is an exact sequence
\[
 0\longrightarrow K\longrightarrow C\longrightarrow X\longrightarrow0
\]
with $K,\Om_\Lambda C\in\cG_2(\Lambda)$.
\item There exists $Q\in\proj\Lambda$ such that
\[
 \Om_\Lambda X\oplus Q\in\cG_2(\Lambda)*\cG_2(\Lambda).
\]
\end{enumerate}
\end{theorem}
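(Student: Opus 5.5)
The plan is to prove $(1)\Leftrightarrow(2)$ by unwinding Definition~\ref{def:dell} with Lemma~\ref{lem:stable-retract}, and $(2)\Leftrightarrow(3)$ by comparing the sequence in (2) with a projective presentation of $X$ and using Lemma~\ref{lem:epi-projective}. Throughout I use that $\cG_r(\Lambda)$ is closed under finite direct sums and direct summands, and contains $0$.

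\emph{$(1)\Rightarrow(2)$.} If $\ddell_\Lambda X\le 1$, there is a witness with $m\le 1$ and $0\le n\le m$. I split into the three cases for $(m,n)$.
\begin{itemize}
\item $n=1$, $m=1$. We get $0\to C_1\to C_0\to X\to 0$ with $1\text{-}\dell C_1\le 0$, where the index $i=1$ means $r=2$, so this is really $2\text{-}\dell C_1\le 0$. We also have $1\text{-}\dell C_0\le 1$. By \eqref{eq:r-dell-G} these say $C_1\in\cG_2(\Lambda)$ and $\Om C_0\in\cG_2(\Lambda)$. This is exactly (2) with $K=C_1$ and $C=C_0$.
\item $n=0$, $m=1$. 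Then $X\cong C_0$ and $\dell X\le 1$, so $\Om X\in\cG_2(\Lambda)$. Take $K=0$ and $C=X$.
\item $m=0$. Then $X\in\cG_1(\Lambda)$, so $X$ is a summand of $\Om Y\oplus P$. Hence $\Om X$ is a summand of $\Om^2Y$ up to projectives, so $\Om X\in\cG_2(\Lambda)$. Again take $K=0$ and $C=X$.
\end{itemize}

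\emph{$(2)\Rightarrow(1)$.} Read the first case backwards: the sequence in (2) is a witness with $n=m=1$.

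\emph{$(2)\Rightarrow(3)$.} Take a projective cover $P_C\to C$ and compose it with $C\to X$. This gives an epimorphism $P_C\to X$ with kernel $L$. The snake lemma, applied to the map from $0\to \Om C\to P_C\to C\to 0$ to $0\to 0\to X\to X\to 0$, gives
\[
0\longrightarrow \Om C\longrightarrow L\longrightarrow K\longrightarrow 0 .
\]
By Lemma~\ref{lem:epi-projective}, $L\simeq \Om X\oplus Q$ for some $Q\in\proj\Lambda$. Hence $\Om X\oplus Q\in\cG_2(\Lambda)*\cG_2(\Lambda)$.

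\emph{$(3)\Rightarrow(2)$.} Suppose $0\to U\to \Om X\oplus Q\to V\to 0$ with $U,V\in\cG_2(\Lambda)$. Put $P=P_X\oplus Q$ and use the epimorphism $(p\ 0):P\to X$, whose kernel is $L=\Om X\oplus Q$. Regard $U\subseteq L\subseteq P$ and set $C=P/U$. Then
\[
0\longrightarrow L/U\cong V\longrightarrow C\longrightarrow X\longrightarrow 0
\]
is exact, so $K=V\in\cG_2(\Lambda)$. Moreover $P\to C$ is an epimorphism from a projective with kernel $U$. By Lemma~\ref{lem:epi-projective}, $U\simeq\Om C\oplus Q'$, so $\Om C$ is a summand of $U$ and lies in $\cG_2(\Lambda)$.

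I expect no serious obstacle. The points needing care are:
\begin{itemize}
\item the index shift in Definition~\ref{def:dell}, namely that $C_i$ carries $r=i+1$;
\item the degenerate $m=0$ case, which uses $\Om\cG_1\subseteq\cG_2$;
\item applying Lemma~\ref{lem:epi-projective} to non-minimal epimorphisms, so that syzygies are only determined up to projective summands.
\end{itemize}
Closure of $\cG_2$ under summands handles the last point.
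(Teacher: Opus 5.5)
Your proposal is correct and follows essentially the same route as the paper. In both, (1)$\Leftrightarrow$(2) comes from unwinding the definition via Lemma~\ref{lem:stable-retract}; (2)$\Rightarrow$(3) uses a projective cover of $C$, the snake lemma and Lemma~\ref{lem:epi-projective}; and (3)$\Rightarrow$(2) takes $C=(P_X\oplus Q)/U$ and applies Lemma~\ref{lem:epi-projective} to $P_X\oplus Q\to C$, while your explicit $m=0$ case and your slightly different snake-lemma diagram are only cosmetic variations.
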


\begin{proof}
For the bound $1$, the defining resolution for $\ddell_\Lambda X$ has length at most one. The length-zero case is included in (2) by taking $K=0$. Lemma~\ref{lem:stable-retract} therefore gives the equivalence of (1) and (2).

Assume (2), and compose a projective cover $\pi:P_C\to C$ with $C\to X$. Let
 $H$ be the kernel of this composite. Consider the commutative diagram:
\[\begin{tikzcd}
	0 \arrow[r]
	& H \arrow[r] \arrow[d, dashed]
	& P_C \arrow[r] \arrow[d]
	& X \arrow[r] \arrow[d, equal]
	& 0 \\
	0 \arrow[r]
	& K \arrow[r]
	& C \arrow[r]
	& X \arrow[r]
	& 0, 
\end{tikzcd}\]
Snake Lemma gives an exact sequence
\[
 0\longrightarrow\Om_\Lambda C\longrightarrow H\longrightarrow K\longrightarrow0.
\]
By Lemma~\ref{lem:epi-projective}, $H\simeq\Om_\Lambda X\oplus Q$ for some projective $Q$. This proves (3).

Conversely, suppose
\[
 0\longrightarrow L\xrightarrow{\iota}\Om_\Lambda X\oplus Q
 \xrightarrow{\rho}K\longrightarrow0
\]
is exact with $L,K\in\cG_2(\Lambda)$. Let
$
 j:\Om_\Lambda X\oplus Q\longrightarrow P_X\oplus Q
$
be the direct sum of the kernel inclusion with $1_Q$, and put $C=\operatorname{coker}(j\iota)$. The universal property of the cokernel gives
 an epimorphism $C\to X$, whose kernel is naturally $K$. Thus
\[
 0\longrightarrow K\longrightarrow C\longrightarrow X\longrightarrow0
\]
is exact. The kernel of $P_X\oplus Q\to C$ is isomorphic to $L$, so Lemma~\ref{lem:epi-projective} yields
$
 L\simeq\Om_\Lambda C\oplus P'
$
for a projective $P'$. Since $\cG_2(\Lambda)$ is closed under direct summands, $\Om_\Lambda C\in\cG_2(\Lambda)$. Hence (2) holds.
\end{proof}

\section{A criterion for one-point extensions}\label{sec:onepoint}

Let $B$ be a finite-dimensional $k$-algebra and $M\in\modu B$. Put
\[
 A=B[M]=\begin{pmatrix}k&M\\0&B\end{pmatrix},
 \qquad
 e_0=\begin{pmatrix}1&0\\0&0\end{pmatrix},
 \qquad e=1-e_0.
\]
We identify $\modu B$ with the full subcategory of $\modu A$ consisting of the modules $X$ satisfying $Xe_0=0$. Let $P_0=e_0A$ and $S_0=P_0/\rad P_0$. There is a canonical exact sequence
\begin{equation}\label{eq:new-simple}
 0\longrightarrow M\longrightarrow P_0\longrightarrow S_0\longrightarrow0.
\end{equation}
Define
\[
 \cT_M=\{N\in\modu B\mid N\text{ embeds into an object of }\add(B\oplus M)\},
\]
and
\[
 \cC_M=\add\bigl(\proj B\cup\{\Om_BN\mid N\in\cT_M\}\bigr).
\]

\begin{lemma}\label{lem:G2-onepoint}
With the notation above,
\begin{equation}\label{eq:G2-onepoint}
 \cG_2(A)=\add\bigl(\{P_0\}\cup\cC_M\bigr).
\end{equation}
Moreover, every module in $\cC_M$ embeds into a finitely generated projective $B$-module.
\end{lemma}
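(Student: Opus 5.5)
The plan is to describe first syzygies over $A$ in terms of $\cT_M$, then second syzygies in terms of $\Om_B\cT_M$, and finally to take additive closures on both sides. Throughout I will use two standard facts. First, $\proj A=\add(P_0\oplus eA)$, and $eA\simeq B$ as a right $A$-module, so the indecomposable projective $A$-modules are $P_0$ together with the indecomposable projective $B$-modules. Second, $\rad P_0=M$ by \eqref{eq:new-simple}, and $\rad(eA)=\rad B$.

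\emph{Step 1: first syzygies.} Let $X\in\modu A$ with projective cover $P=P_0^a\oplus B^b\to X$. Since a projective cover has kernel in the radical, we get
\[
\Om_AX\subseteq\rad P=M^a\oplus(\rad B)^b .
\]
This radical is annihilated by $e_0$, so $\Om_AX\in\modu B$. It also embeds into $M^a\oplus B^b$, so $\Om_AX\in\cT_M$.

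Conversely, take $N\in\cT_M$ and an embedding $N\hookrightarrow M^a\oplus B^b\subseteq P_0^a\oplus B^b$. Let $Y$ be the cokernel of the composite embedding $N\hookrightarrow P_0^a\oplus B^b$. Then Lemma~\ref{lem:epi-projective} gives $N\simeq\Om_AY\oplus Q$ for some $Q\in\proj A$. Since $N$ is a $B$-module, $Q$ is in fact a projective $B$-module.

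\emph{Step 2: second syzygies.} For $N\in\modu B$ we have $Ne_0=0$, so $\Hom_A(P_0,N)=0$. Hence the projective $B$-cover of $N$ is already a projective $A$-cover, and $\Om_AN=\Om_BN$.

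Combining this with Step 1 gives $\Om_A^2X=\Om_B(\Om_AX)$ with $\Om_AX\in\cT_M$, so $\Om_A^2X\in\cC_M$. Conversely, for $N\in\cT_M$ written as $N\simeq\Om_AY\oplus Q$ with $Q\in\proj B$, we get
\[
\Om_BN=\Om_A(\Om_AY)\oplus\Om_BQ=\Om_A^2Y ,
\]
since $\Om_BQ=0$. Thus every $\Om_BN$ with $N\in\cT_M$ is a second $A$-syzygy.

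Together with $\proj A=\add(P_0\cup\proj B)$, the two inclusions yield \eqref{eq:G2-onepoint}. One remark is needed here: the additive closure on each side is taken in $\modu A$, and a direct summand of a $B$-module is again a $B$-module, so the two closures agree.

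\emph{Step 3: the embedding claim.} Each $\Om_BN$ is a submodule of a projective $B$-module, namely the projective cover of $N$, and projective $B$-modules trivially embed into themselves. Finite direct sums of modules that embed into projectives again embed into projectives, and so do direct summands of them. This proves that every module in $\cC_M$ embeds into a finitely generated projective $B$-module.

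The only delicate point is the bookkeeping of projective summands in Step 1. The syzygy $\Om_AY$ may differ from $N$ by a projective summand, and I must check that this summand lies in $\proj B$ rather than involving $P_0$. This holds because $N$ is a $B$-module and $P_0$ is not. Once that is settled, the summand is annihilated by $\Om_B$ in Step 2, and the rest is routine.
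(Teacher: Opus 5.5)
Your proposal is correct and takes essentially the same route as the paper. As there, the radical of a projective cover puts $\Om_AX$ in $\cT_M$, and $\Om_AN\simeq\Om_BN$ for $N\in\modu B$ gives $\Om_A^2X\in\cC_M$. For the reverse inclusion you embed $N\in\cT_M$ into $P_0^a\oplus B^b$ and apply Lemma~\ref{lem:epi-projective}, exactly as the paper does.

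There is one harmless slip: the projective cover of $X$ should be written $P_0^a\oplus P_B$ with $P_B\in\proj B$, not $P_0^a\oplus B^b$. Nothing changes, since $\rad P_B\subseteq P_B\in\add B$.
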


\begin{proof}
Let $N\in \operatorname{mod} B$.
	Since  $
	\operatorname{Hom}_A(P_0,N)
\simeq Ne_0
	=0,
$ we have the projective cover of $N$ as an $A$-module contains no direct
	summand isomorphic to $P_0$. Therefore its projective cover is a
	projective $B$-module. Consequently,
	\begin{equation}\label{eq:syzygy-comparison}
		\Omega_A N \simeq \Omega_B N
		\qquad
		\text{for every }N\in\operatorname{mod}B.
	\end{equation}
	
	Let $X\in\operatorname{mod}A$, and
$
	P\twoheadrightarrow X
$ be a projective cover. Since the indecomposable projective $A$-modules
	are $P_0$ together with the indecomposable projective $B$-modules, we
	may write
$
	P\simeq P_0^{\,r}\oplus P_B
$
	for some $P_B\in\operatorname{proj}B$. Then
$
	\Omega_A X\subseteq \operatorname{rad}P	\simeq
	M^{\,r}\oplus \operatorname{rad}_B P_B.
$
Since $\operatorname{rad}_B P_B$ is a submodule of the projective
	$B$-module $P_B$, it follows that $\operatorname{rad}P$ embeds into an
	object of
$
	\operatorname{add}(B\oplus M)
$
which implies
$
	\Omega_A X\in \mathcal T_M.
$
	In particular, $\Omega_A X$ is annihilated by $e_0$, so it is a
	$B$-module. Using \eqref{eq:syzygy-comparison}, we obtain
$
	\Omega_A^2X
	\simeq
	\Omega_B(\Omega_A X)
	\in \mathcal C_M.
$
	Since every projective $A$-module belongs to
$
	\operatorname{add}\bigl(\{P_0\}\cup\operatorname{proj}B\bigr)
	\subseteq
	\operatorname{add}\bigl(\{P_0\}\cup\mathcal C_M\bigr),
$ we have
$
	\mathcal G_2(A)
	\subseteq
	\operatorname{add}\bigl(\{P_0\}\cup\mathcal C_M\bigr).
$
	
	For the converse inclusion, it is enough to show that
$
	\Omega_BN\in\mathcal G_2(A)$
	\text{for every }$ N\in\mathcal T_M,
$
	since every projective $B$-module and $P_0$ are projective
	$A$-modules.
Since $N$ embeds into an object of
	$\operatorname{add}(B\oplus M)$,
	there exist  an embedding
$
	N\hookrightarrow B^{a}\oplus M^{b}
$
	for some $a,b\geq 0$.
	The canonical inclusion $M\hookrightarrow P_0$ therefore gives an
	$A$-module embedding
$
	N\hookrightarrow B^{a}\oplus P_0^{\,b}.
$
	Let $Z$ be its cokernel. We obtain an exact sequence
	\[
	0\longrightarrow N
	\longrightarrow B^{a}\oplus P_0^{\,b}
	\longrightarrow Z
	\longrightarrow 0,
	\]
	whose middle term is projective over $A$.
	By Lemma~\ref{lem:epi-projective}, there exists a projective $A$-module $Q$ such that
$
	N\simeq \Omega_A Z\oplus Q.
$
	Since $Ne_0=0$, also $Qe_0=0$. Thus $Q$ contains no direct summand
	isomorphic to $P_0$, and hence $Q$ is in fact projective over $B$.
		Taking first syzygies over $A$ yields
$
	\Omega_A N
	\simeq
	\Omega_A^2 Z,
$
	because $\Omega_A Q=0$. On the other hand,
	\eqref{eq:syzygy-comparison} gives
$
	\Omega_A N\simeq \Omega_BN.
$
	Therefore
$
	\Omega_BN
	\simeq
	\Omega_A^2Z
	\in \mathcal G_2(A).
$
	This proves
$
	\operatorname{add}\bigl(\{P_0\}\cup\mathcal C_M\bigr)
	\subseteq
	\mathcal G_2(A),
$
	and hence the desired equality.
	
Finally, every generator of $\mathcal C_M$ embeds into a finitely
generated projective $B$-module. Indeed, this is immediate for
projective $B$-modules, while for $N\in\mathcal T_M$ one has
$
\Omega_BN\subseteq P_N,
$
where $P_N\twoheadrightarrow N$ is a projective cover.
Since this property is preserved under finite direct sums and direct
summands, every object of $\mathcal C_M$ embeds into a finitely
generated projective $B$-module.
\end{proof}

\begin{lemma}\label{lem:remove-new-projective}
Let $\mathcal C\subseteq\modu B$ be closed under finite direct sums and direct summands, and put
\[
 \mathcal D=\add(\{P_0\}\cup\mathcal C).
\]
For $X\in\modu B$, the following are equivalent.
\begin{enumerate}[label=\textup{(\arabic*)}]
\item $X\oplus P\in\mathcal D*\mathcal D$ for some $P\in\proj A$.
\item $X\oplus Q\in\mathcal C*\mathcal C$ for some $Q\in\proj B$.
\end{enumerate}
\end{lemma}

\begin{proof}
The implication (2)$\Rightarrow$(1) is immediate. 

Conversely,	assume that
$
	X\oplus P\in\mathcal D*\mathcal D
$
	for some $P\in\operatorname{proj}A$. Thus there is a short exact
	sequence
	\[
	0\longrightarrow D_1
	\xrightarrow{\iota}
	X\oplus P
	\xrightarrow{\pi}
	D_2
	\longrightarrow0
	\]
	with $D_1,D_2\in\mathcal D$.
We may write
$
	P\simeq P_0^{\,b}\oplus Q
$
	for some $b\geq0$ and $Q\in\operatorname{proj}B$.
	Moreover, 
	there exist integers $a,c\geq0$ and modules $L,K\in\mathcal C$ such
	that
$
	D_1\simeq P_0^{\,a}\oplus L$ and
$
	D_2\simeq P_0^{\,c}\oplus K.
$
	Hence we have an exact sequence
	\begin{equation}\label{eq:split-P0-sequence}
		0\longrightarrow
		P_0^{\,a}\oplus L
		\longrightarrow
		P_0^{\,b}\oplus X\oplus Q
		\longrightarrow
		P_0^{\,c}\oplus K
		\longrightarrow0.
	\end{equation}
	Since $P_0^{\,c}$ is projective, the composite
$
	P_0^{\,b}\oplus X\oplus Q
	\longrightarrow
	P_0^{\,c}\oplus K
	\longrightarrow
	P_0^{\,c}
$
	is a split epimorphism. Therefore the middle term decomposes as
$
	P_0^{\,b}\oplus X\oplus Q
	\simeq
	P_0^{\,c}\oplus E
$
	for some $A$-module $E$, and after removing this split projective
	summand, \eqref{eq:split-P0-sequence} induces a short exact sequence
	\begin{equation}\label{eq:reduced-sequence}
		0\longrightarrow
		P_0^{\,a}\oplus L
		\longrightarrow
		E
		\longrightarrow
		K
		\longrightarrow0.
	\end{equation}
	Now $L$, $X$, $Q$, and $K$ are $B$-modules, so they are annihilated by
	$e_0$. In particular, none of them has a direct summand isomorphic to
	$P_0$. By the Krull--Schmidt theorem, 
$
	E\simeq P_0^{\,b-c}\oplus X\oplus Q.
$	Applying the exact functor
	$
	(-)e_0
$
	to \eqref{eq:reduced-sequence},
	we obtain
$	a=b-c.$
	Thus \eqref{eq:reduced-sequence} may be written as
	\begin{equation}\label{eq:equal-P0-sequence}
		0\longrightarrow
		P_0^{\,a}\oplus L
		\xrightarrow{\iota}
		P_0^{\,a}\oplus X\oplus Q
		\longrightarrow
		K
		\longrightarrow0.
	\end{equation}
	 Since
$
	\operatorname{Hom}_A(P_0,Y)
	\simeq Ye_0=0
$
	for every $B$-module $Y$, the map $\iota$ has the form
	\[
	\iota=
	\begin{pmatrix}
		f & h\\
		0 & g
	\end{pmatrix}
	:
	P_0^{\,a}\oplus L
	\longrightarrow
	P_0^{\,a}\oplus(X\oplus Q).
	\]
	Applying $(-)e_0$ to \eqref{eq:equal-P0-sequence} shows that
$
	f:P_0^{\,a}\longrightarrow P_0^{\,a}
$
	is an isomorphism.
		Therefore, after precomposing $\iota$ with the automorphism
$
	\begin{pmatrix}
		1 & -f^{-1}h\\
		0 & 1
	\end{pmatrix}
$
	of $P_0^{\,a}\oplus L$, and then composing on the target with the
	automorphism $f^{-1}$ on the first summand, we may replace $\iota$ by
$
	\begin{pmatrix}
		1&0\\
		0&g
	\end{pmatrix}.
$
	Since $\iota$ is injective, $g$ is injective. Moreover, the cokernel of
	$\iota$ is $K$, while the identity map on the $P_0^{\,a}$-summand
	contributes nothing to the cokernel. Hence
$
	\operatorname{coker}g\simeq K.
$
	We therefore obtain a short exact sequence
	\[
	0\longrightarrow L
	\xrightarrow{g}
	X\oplus Q
	\longrightarrow K
	\longrightarrow0.
	\]
	Since $L,K\in\mathcal C$, this shows that
$
	X\oplus Q\in\mathcal C*\mathcal C
$
	which proves $(2)$.
\end{proof}

\begin{theorem}\label{thm:onepoint-criterion}
Let $A=B[M]$. Then
\begin{equation}\label{eq:onepoint-criterion}
 \ddell_AS_0\le1
 \quad\Longleftrightarrow\quad
 M\oplus Q\in\cC_M*\cC_M
 \text{ for some }Q\in\proj B.
\end{equation}
\end{theorem}

\begin{proof}
By \eqref{eq:new-simple}, $\Om_AS_0\simeq M$. Apply Theorem~\ref{thm:ddell-one}, Lemma~\ref{lem:G2-onepoint}, and Lemma~\ref{lem:remove-new-projective} with $\mathcal C=\cC_M$.
\end{proof}

\begin{corollary}\label{cor:extension-closed}
If $\cC_M$ is extension-closed, then
\[
 \ddell_AS_0\le1\quad\Longleftrightarrow\quad M\in\cC_M.
\]
\end{corollary}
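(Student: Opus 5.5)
The plan is to reduce the corollary to Theorem~\ref{thm:onepoint-criterion}, which characterizes $\ddell_AS_0\le1$ by the condition $M\oplus Q\in\cC_M*\cC_M$ for some $Q\in\proj B$. It then suffices to show that, when $\cC_M$ is extension-closed, this condition is equivalent to $M\in\cC_M$.

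For the direction ``$\Leftarrow$'', assume $M\in\cC_M$. Take $Q=0$ and use the split exact sequence $0\to 0\to M\to M\to0$. Since $0\in\cC_M$ (an additive closure contains the zero module), this exhibits $M\in\cC_M*\cC_M$. Theorem~\ref{thm:onepoint-criterion} then gives $\ddell_AS_0\le1$. Extension-closedness is not needed for this direction.

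For the direction ``$\Rightarrow$'', assume $\ddell_AS_0\le1$. Theorem~\ref{thm:onepoint-criterion} gives an exact sequence
\[
0\longrightarrow L\longrightarrow M\oplus Q\longrightarrow K\longrightarrow0
\]
with $L,K\in\cC_M$ and $Q\in\proj B$. Because $\cC_M$ is extension-closed, $M\oplus Q\in\cC_M$. Since $\cC_M$ is defined as an additive closure $\add(\cdots)$, it is closed under direct summands, so $M\in\cC_M$.

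There is no real obstacle here. The only point to check is that $\cC_M$, being an $\add$-closure, contains $0$ and is closed under direct summands; this is what lets the $\cC_M*\cC_M$ condition collapse to membership in $\cC_M$ itself. The projective summand $Q$ is harmless, since it is absorbed by summand-closure.
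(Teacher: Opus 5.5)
Your proposal is correct and follows essentially the same route as the paper. You reduce to Theorem~\ref{thm:onepoint-criterion} and use $0\in\cC_M$ together with extension-closure to get $\cC_M*\cC_M=\cC_M$, then summand-closure to drop $Q$ (taking $Q=0$ for the reverse direction is a harmless variant of the paper's remark that $\cC_M$ contains the projectives).
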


\begin{proof}
Since $0\in\cC_M$, extension-closure gives $\cC_M*\cC_M=\cC_M$. The category $\cC_M$ contains the projectives and is closed under direct summands, so $M\oplus Q\in\cC_M$ for a projective $Q$ if and only if $M\in\cC_M$.
\end{proof}

\begin{lemma}\label{lem:findim0-simple}
If $\Findim(B^{\op})=0$, then every simple right $B$-module embeds into $B$. In particular, $\dell_BS=0$ for every simple right $B$-module $S$.
\end{lemma}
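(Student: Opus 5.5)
The plan is to use the classical fact that if $\Findim(B^{\op})=0$, then every simple \emph{left} module is a quotient of... no; the fact I want is the dual statement. By Bass' characterization, $\Findim(B^{\op})=0$ is equivalent to $B$ being left perfect in the sense that every nonzero right module has nonzero socle, together with the following condition: every simple left $B$-module $T$ satisfies $\Hom_{B^{\op}}(T,B)\neq 0$, equivalently... Since this is easy to get backwards, I will argue directly.

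Let $S$ be a simple right $B$-module and suppose $S$ does not embed into $B_B$. Then $\Hom_B(S,B)=0$. Put $e$ for a primitive idempotent with $S=eB/e\rad B$, and let $T=Be/\rad B e$ be the corresponding simple left module. I will use the standard duality $D=\Hom_k(-,k)$: $S$ embeds in $B_B$ iff $DS$ is a quotient of $D(B_B)=D B$ in $B^{\op}$-modules... More concretely, I will use Bass' theorem in its finite-dimensional form: for an Artin algebra $\Lambda$, $\Findim\Lambda=0$ iff every simple left $\Lambda$-module embeds in ${}_\Lambda\Lambda$, equivalently every simple right module is a quotient... Applying this to $\Lambda=B^{\op}$ (whose right modules are left $B$-modules, and whose left modules are right $B$-modules) gives exactly that every simple right $B$-module embeds in $B_B$.

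To avoid relying on the precise form of Bass' theorem, I will include a self-contained argument. Suppose $\Hom_B(S,B)=0$. Then I build an infinitely generated left module of projective dimension one: take $DS$, or better, the left module $\operatorname{Tr}$-construction. I would try the following. $\Hom_B(S,B)=0$ means $\operatorname{Ext}$-duality gives a left module $N=\operatorname{coker}(\Hom_B(P_0,B)\to\Hom_B(P_1,B))$, the transpose of $S$, where $P_1\to P_0\to S\to 0$ is a minimal presentation. The map $\Hom(P_0,B)\to\Hom(P_1,B)$ is injective exactly because $\Hom_B(S,B)=0$. Hence $\pd N\le1$, and $N$ is nonprojective since otherwise the sequence splits, which would force $P_1\to P_0$ to split and hence make $S$ projective with $\Hom(S,B)\neq0$. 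Thus $\pd N=1$ and $\findim(B^{\op})\ge1$, contradicting $\Findim(B^{\op})=0$. The main obstacle is this step: checking that $N$ is not projective. If the presentation map had a split part, then minimality would rule that out, so I expect this to be handled by minimality.

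For the final claim, $S\hookrightarrow B$ gives $S\simeq\Om_B(B/S)$, a first syzygy. So $S\in\cG_1(B)$ and, by Lemma~\ref{lem:stable-retract}, $\dell_BS=0$.
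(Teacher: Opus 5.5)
Your self-contained argument is correct and is essentially the paper's proof: dualize a minimal presentation $P_1\xrightarrow{f}P_0\to S\to0$ with $\Hom_B(-,B)$, use $\Hom_B(S,B)=0$ to get a short exact sequence of left modules $0\to\Hom_B(P_0,B)\to\Hom_B(P_1,B)\to N\to0$, and show it does not split, so that $\pd N=1$, which contradicts $\Findim(B^{\op})=0$. Two small precisions. First, dualizing a splitting back makes $f$ a split epimorphism, hence $S=0$ rather than merely projective, and that is the contradiction. Second, $S\subseteq B$ gives $S$ only as a summand of $\Om_B(B/S)\oplus Q$, by Lemma~\ref{lem:epi-projective}; this still places $S$ in $\cG_1(B)$, so $\dell_BS=0$ follows as you say.
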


\begin{proof}
Put $(-)^*=\Hom_B(-,B)$. Suppose $S^*=0$ for a simple right module $S$, and choose a minimal projective presentation
\[
 P_1\xrightarrow{f}P_0\longrightarrow S\longrightarrow0.
\]
Dualizing gives an exact sequence of finitely generated left $B$-modules
\[
 0\longrightarrow P_0^*\xrightarrow{f^*}P_1^*\longrightarrow\operatorname{coker}f^*\longrightarrow0.
\]
If this sequence split, dualizing a splitting and using the evaluation isomorphisms for finitely generated projectives would make $f$ a split epimorphism, contradicting $\operatorname{coker}f=S\ne0$. Hence $\operatorname{coker}f^*$ has projective dimension exactly one, contradicting $\Findim(B^{\op})=0$. Therefore $S^*\ne0$, so a nonzero map $S\to B$ is injective. Lemmas~\ref{lem:epi-projective} and \ref{lem:stable-retract} then give $\dell_BS=0$.
\end{proof}

\begin{proposition}\label{prop:Findim-onepoint}
	Suppose that $\Findim(B^{\op})=0$ and $M\ne0$. 	Let
	$
	A=B[M]
	$. Then
	\begin{equation}\label{eq:both-findim-one}
		\Findim(A^{\op})=1.
	\end{equation}
	Moreover, every simple right $A$-module different from  $S_0$ has delooping level zero.
\end{proposition}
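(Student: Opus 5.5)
The plan is to work directly with left $A$-modules. A left $A$-module $N$ splits as a $k$-space $e_0N$ together with a left $B$-module $eN$. The multiplication $M\otimes_B eN\to e_0N$ comes from $e_0Ae=M$, and $eAe_0=0$.

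The indecomposable projective left $A$-modules are as follows:
\begin{itemize}
\item $Ae_0=ke_0$, which is simple and projective, because $eAe_0=0$;
\item $Ae_i$ for each vertex $i$ of $B$, with $e_0Ae_i=Me_i$ and $eAe_i=Be_i$.
\end{itemize}
The restriction functor $N\mapsto eN\simeq eA\otimes_A N$ is exact. It sends $Ae_0$ to $0$ and $Ae_i$ to $Be_i$, so it preserves projectivity, including for arbitrary (infinite) direct sums.

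\textbf{Upper bound.} Let $N$ be an arbitrary left $A$-module with $\pd_A N<\infty$.
\begin{enumerate}
\item Applying the exact restriction functor to a finite projective resolution gives $\pd_B eN\le \pd_A N<\infty$.
\item Since $\Findim(B^{\op})=0$, the left $B$-module $eN$ is projective.
\item Consider the counit $\varepsilon\colon Ae\otimes_B eN\to N$. Since $eN$ is a direct summand of a free $B$-module and $Ae\otimes_B B=Ae$, the source is a projective left $A$-module.
\item Multiplying by $e$ shows that $e\varepsilon$ is an isomorphism. Hence $\ker\varepsilon$ and $\operatorname{coker}\varepsilon$ are annihilated by $e$. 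Every such left module is a direct sum of copies of the projective simple $Ae_0$.
\item The cokernel is therefore projective, so $N\to\operatorname{coker}\varepsilon$ splits. This gives $N\simeq \operatorname{im}\varepsilon\oplus(\text{projective})$.
\item The sequence $0\to\ker\varepsilon\to Ae\otimes_B eN\to\operatorname{im}\varepsilon\to0$ has its first two terms projective. Hence $\pd_A N\le1$, and so $\Findim(A^{\op})\le1$.
\end{enumerate}

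\textbf{Lower bound.} Since $M\neq0$, choose a vertex $i$ of $B$ and a nonzero $m\in Me_i=e_0Ae_i$.
\begin{enumerate}
\item Right multiplication by $m$ gives a nonzero map $Ae_0=ke_0\to Ae_i$, $\lambda e_0\mapsto \lambda m$. It is injective because its source is simple.
\item This map does not split. Indeed, $Ae_i$ is indecomposable and not simple, since it contains both $e_i$ and $m$ with $Am\subsetneq Ae_i$.
\item So the cokernel of this map has projective dimension exactly $1$, which gives $\Findim(A^{\op})\ge1$.
\end{enumerate}
The main point needing care is the upper bound for infinitely generated modules. The argument only uses exactness of $e(-)$ and the fact that modules killed by $e$ are semisimple sums of the projective simple $Ae_0$, so it goes through without change.

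\textbf{Simple modules other than $S_0$.} Let $S$ be a simple right $A$-module with $S\neq S_0$. Then $Se_0=0$, so $S$ is a simple $B$-module. By Lemma~\ref{lem:findim0-simple}, $S$ embeds into $B$. As a right $A$-module, $B=eA$ is projective, since the right module $eA$ has $e_0$-component $eAe_0=0$. Applying Lemma~\ref{lem:epi-projective} to the cokernel $Z$ of $S\hookrightarrow eA$ gives $S\simeq \Om_A Z\oplus Q$ with $Q$ projective. Therefore $S\in\cG_1(A)$, and Lemma~\ref{lem:stable-retract} gives $\dell_AS=0$.
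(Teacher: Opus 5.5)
Your proof is correct. The treatment of the simples $S\neq S_0$ is the same as the paper's. Your lower bound is a variant of the paper's argument:

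\begin{itemize}
\item The paper uses the sequence $0\to Ae_0Ae\to Ae\to Ae/Ae_0Ae\to 0$. It thereby shows that the left module $B$, with $M$ acting trivially, has projective dimension one, and it rules out a section by computing with a nonzero $m\in M$.
\item You instead embed the one-dimensional module $km\cong Ae_0$ into a single indecomposable $Ae_i$. You exclude splitting by indecomposability together with a dimension count.
\end{itemize}

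The genuine difference is the upper bound. The paper simply invokes Psaroudakis's inequality $\Findim(A^{\op})\le\Findim(B^{\op})+\Findim(k)+1$ for triangular matrix rings. You prove the bound directly, in three steps:
\begin{itemize}
\item Restrict along the exact functor $e(-)$ to see that $eN$ is projective over $B$.
\item Use the counit $Ae\otimes_B eN\to N$. Its kernel and cokernel are killed by $e$, so they are direct sums of the simple projective $Ae_0$.
\item Split off the cokernel and read off $\pd_A N\le 1$.
\end{itemize}

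This argument is self-contained and treats infinitely generated modules explicitly. It also gives slightly more: every left $A$-module whose restriction to $B$ is projective is of the form $P'\oplus\operatorname{coker}\bigl((Ae_0)^{(J)}\hookrightarrow P\bigr)$ with $P,P'$ projective. The citation buys brevity and places the result in a general framework. Your version removes the dependence on an external result and shows exactly where $\Findim(B^{\op})=0$ and the simple projectivity of $Ae_0$ (i.e.\ $eAe_0=0$) enter.

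Two small points are implicit and deserve a sentence each:
\begin{itemize}
\item Since $B$ is an arbitrary finite-dimensional algebra, the ``vertices'' $i$ should be read as a complete set of primitive orthogonal idempotents of $B$.
\item A simple right module $S$ with $Se_0\neq 0$ is a quotient of $e_0A$, hence isomorphic to $S_0$. This justifies $Se_0=0$ for $S\neq S_0$.
\end{itemize}
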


\begin{proof}

	The opposite algebra is
	the triangular matrix algebra
	\[
	A^{\op}\simeq
	\begin{pmatrix}
		B^{\op} & {}_{B^{\op}}\!M_k\\
		0 & k
	\end{pmatrix}.
	\]
	By the finitistic-dimension inequality for triangular matrix rings
	\cite[Corollary~8.11]{Psaroudakis2014},
	\[
	\Findim(A^{\op})
	\le
	\Findim(B^{\op})+\Findim(k)+1.
	\]
	Therefore,
	$
	\Findim(A^{\op})\le1.
	$	It remains to prove that the value is nonzero. Put $e=1-e_0$.
	There is an exact sequence of left $A$-modules
	\[
	0\longrightarrow Ae_0Ae
	\longrightarrow Ae
	\longrightarrow Ae/Ae_0Ae
	\longrightarrow0.
	\]
	The middle term is projective, and
	$
	Ae_0Ae\simeq Ae_0\otimes_k e_0Ae
	$
	is a direct sum of copies of the projective module $Ae_0$.
	Hence
	$
	\pd_A(Ae/Ae_0Ae)\le1.
	$	We claim that the sequence does not split. Otherwise, let
	$s:Ae/Ae_0Ae\longrightarrow Ae$
	be a section. If $\bar e$ denotes the image of $e$ in the quotient,
	then
	$
	s(\bar e)=e+n
	$
	for some $n\in Ae_0Ae$. Since $M\neq0$, choose
	$0\neq m\in e_0Ae=M$. Clearly, $m\bar e=0$.
	Hence $A$-linearity of $s$ gives
	$
	0=s(m\bar e)=m(e+n)=m,
	$
	because $me=m$ and $mn=0$, the latter following from $ee_0=0$.
	This is a contradiction. Therefore
	$
	\pd_A(Ae/Ae_0Ae)=1.
	$
	Thus
	\[
	\Findim(A^{\op})=1.
	\]

	Finally,
	Lemma~\ref{lem:findim0-simple} embeds every old simple right $A$-module
	$S$ into $B=eA$. Since $eA$ is projective as a right $A$-module, $S$ is,
	up to a projective summand, a first syzygy. Lemma~\ref{lem:stable-retract}
	therefore gives $\dell_AS=0$.
\end{proof}

\begin{corollary}\label{cor:ddell-onepoint-equality}
	Suppose $\Findim(B^{\op})=0$ and $M\ne0$. Then
	\[
	\ddell A=1
	\quad\Longleftrightarrow\quad
	M\oplus Q\in\cC_M*\cC_M
	\text{ for some }Q\in\proj B.
	\]
\end{corollary}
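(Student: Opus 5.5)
The plan is to reduce the global statement $\ddell A=1$ to the single simple module $S_0$, using Proposition~\ref{prop:Findim-onepoint} and the chain \eqref{eq:intro-chain}, and then to apply Theorem~\ref{thm:onepoint-criterion}. Recall that $\ddell A$ is the maximum of $\ddell_A S$ over the simple right $A$-modules. These simples are $S_0$ together with the old simple $B$-modules.

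First, I will control the old simples. By Proposition~\ref{prop:Findim-onepoint}, every simple $S\neq S_0$ satisfies $\dell_A S=0$. Taking the length-zero resolution with $C_0=S$ in Definition~\ref{def:dell} gives $\ddell_A S\le \dell_A S=0$; this is the pointwise form of the right inequality in \eqref{eq:intro-chain}. Hence
\[
\ddell A=\max\{\ddell_A S_0,\,0\}=\ddell_A S_0 .
\]

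Second, I will pin the value from below. Proposition~\ref{prop:Findim-onepoint} gives $\Findim(A^{\op})=1$, and the left inequality of \eqref{eq:intro-chain} (Guo--Igusa) gives $\ddell A\ge 1$. Consequently,
\[
\ddell A=1\iff \ddell A\le 1\iff \ddell_A S_0\le 1 .
\]
The last condition is equivalent, by Theorem~\ref{thm:onepoint-criterion}, to $M\oplus Q\in\cC_M*\cC_M$ for some $Q\in\proj B$, which is the claim.

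I do not expect a real obstacle here. The only point requiring care is the identification $\ddell A=\ddell_A S_0$, since it uses the pointwise bound $\ddell_\Lambda X\le\dell_\Lambda X$. This bound holds because in Definition~\ref{def:dell} the case $n=0$ allows the trivial resolution $0\to C_0=X\to X\to 0$ with $1\text{-}\dell_\Lambda C_0\le m$. The lower bound comes for free from Guo--Igusa's theorem combined with $\Findim(A^{\op})=1$, so no direct analysis of $S_0$ is needed to exclude the value $0$.
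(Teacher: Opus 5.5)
Your proposal is correct and follows essentially the same route as the paper. The old simples have $\dell_A S=0$, hence $\ddell_A S=0$; the chain \eqref{eq:intro-chain} together with $\Findim(A^{\op})=1$ gives $\ddell A\ge 1$; and Theorem~\ref{thm:onepoint-criterion} then finishes the argument. Your justification of the pointwise bound $\ddell_\Lambda X\le\dell_\Lambda X$ via the length-zero resolution makes explicit a step the paper leaves implicit.
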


\begin{proof}
	All old simple modules have derived delooping level zero by Proposition~\ref{prop:Findim-onepoint}. Moreover, \eqref{eq:intro-chain} and \eqref{eq:both-findim-one} give $\ddell A\ge1$. The assertion follows from Theorem~\ref{thm:onepoint-criterion}.
\end{proof}

\begin{theorem}\label{thm:ddell-one-or-two}
	Suppose that $\Findim(B^{\op})=0$, $M\ne0$, and $2\text{-}\dell_BM\le1$. Then
	\[
	\Findim(A^{\op})=1
	\qquad \text{and}\qquad
	\ddell A\in\{1,2\}.
	\]
	More precisely, $\ddell A=1$ if and only if the right-hand side of \eqref{eq:onepoint-criterion} holds. Otherwise
	\[
	\Findim(A^{\op})=1<\ddell A=\dell A=2.
	\]
\end{theorem}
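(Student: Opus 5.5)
The plan is to combine Proposition~\ref{prop:Findim-onepoint}, Corollary~\ref{cor:ddell-onepoint-equality}, and the chain \eqref{eq:intro-chain}. The one new ingredient is an upper bound $\dell_AS_0\le 2$, which will come from the hypothesis $2\text{-}\dell_BM\le1$.

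\emph{First step.} Proposition~\ref{prop:Findim-onepoint} gives $\Findim(A^{\op})=1$. It also shows that every old simple right $A$-module $S$ has $\dell_AS=0$, hence $\ddell_AS=0$, since $\ddell\le\dell$ for each module by taking the length-zero sequence in Definition~\ref{def:dell}. Therefore $\dell A=\dell_AS_0$ and $\ddell A=\ddell_AS_0$. By \eqref{eq:intro-chain} we have $\ddell A\ge 1$. Corollary~\ref{cor:ddell-onepoint-equality} then says that $\ddell A=1$ exactly when the right-hand side of \eqref{eq:onepoint-criterion} holds.

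\emph{Second step: the bound $\dell_AS_0\le2$.} By Lemma~\ref{lem:stable-retract}, it suffices to show $\Om_A^2S_0\in\cG_3(A)$. From \eqref{eq:new-simple} we get $\Om_AS_0\simeq M$. By \eqref{eq:syzygy-comparison}, $\Om_A^2S_0\simeq\Om_AM\simeq\Om_BM$. The hypothesis $2\text{-}\dell_BM\le1$, via \eqref{eq:r-dell-G} over $B$, gives
\[
\Om_BM\oplus P'\simeq\Om_B^3Y\oplus P''
\]
up to $\add$ closure, for some $Y\in\modu B$ and some projective $B$-modules $P',P''$. Projective $B$-modules are projective $A$-modules, and by iterating \eqref{eq:syzygy-comparison} (each $\Om_B^iY$ is again a $B$-module) we get $\Om_B^3Y\simeq\Om_A^3Y$. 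Hence $\Om_BM\in\cG_3(A)$ and so $\dell_AS_0\le2$. The main point to check is that $\cG_3(B)$ maps into $\cG_3(A)$ compatibly with $\add$. This follows because direct sums and summands are preserved under the inclusion $\modu B\subseteq\modu A$, and syzygies agree by \eqref{eq:syzygy-comparison}. I expect this to be routine.

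\emph{Conclusion.} We now have $1\le\ddell A\le\dell A\le 2$, so $\ddell A\in\{1,2\}$. If the criterion fails, then $\ddell A\ne1$, so $\ddell A=2$, and the squeeze forces $\dell A=2$. This gives $\Findim(A^{\op})=1<\ddell A=\dell A=2$. The only genuinely new step is the second one, and its subtle point is the stable-retract bookkeeping between $B$ and $A$. Everything else is assembling results already proved in the paper.
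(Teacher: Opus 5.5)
Your proposal is correct and follows essentially the same route as the paper: reduce to $S_0$ via Proposition~\ref{prop:Findim-onepoint}, obtain $\dell_AS_0\le2$ from $\Om_A^2S_0\simeq\Om_BM\in\cG_3(B)\subseteq\cG_3(A)$ using \eqref{eq:syzygy-comparison}, and then squeeze $1\le\ddell A\le\dell A\le2$, with Theorem~\ref{thm:onepoint-criterion} (equivalently Corollary~\ref{cor:ddell-onepoint-equality}) deciding whether the value is one. The only cosmetic point is your ``up to $\add$ closure'' isomorphism: what the hypothesis actually gives is that $\Om_BM$ is a direct summand of $\Om_B^3Y\oplus P''$ for some $Y\in\modu B$ and $P''\in\proj B$, and that is all you need.
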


\begin{proof}
	The opposite finitistic dimension is given by Proposition~\ref{prop:Findim-onepoint}, and the old simples have delooping level zero. By Lemma~\ref{lem:stable-retract}, the hypothesis $2\text{-}\dell_BM\le1$ is equivalent to
	$
	\Om_BM\in\cG_3(B).
	$
	Every projective $B$-module is projective over $A$, and \eqref{eq:syzygy-comparison} identifies the higher syzygies of $B$-modules over $B$ and $A$. Thus $\cG_3(B)\subseteq\cG_3(A)$. Since $\Om_AS_0\simeq M$, we have 
	$
	\Om_A^2S_0\simeq\Om_BM\in\cG_3(A),
	$ and 
	so $\dell_AS_0\le2$. Hence
	\[
	1=\Findim(A^{\op})\le\ddell A\le\dell A\le2.
	\]
	Theorem~\ref{thm:onepoint-criterion} decides whether the value is one; if not, both $\ddell A$ and $\dell A$ are two.
\end{proof}

\begin{remark}\label{rem:2dell}
	The condition $2\text{-}\dell_BM\le1$ is equivalent to $\Om_BM\in\cG_3(B)$. In particular it holds when $\pd_BM\le1$.
\end{remark}

\section{Examples}\label{sec:classification}

Throughout this section, let $Q$ be  the following quiver
\begin{equation*}
 \begin{tikzpicture}[baseline=-2pt,>=Stealth,scale=.9]
  \node (1) at (0,0) {$1$};
  \node (2) at (2,0) {$2$};
  \node (3) at (-1.5,0) {$3$};
  \draw[->,bend left=18] (1) to node[above] {$b$} (2);
  \draw[->,bend left=18] (2) to node[below] {$d$} (1);
  \draw[->] (1) to node[above] {$c$} (3);
 \end{tikzpicture}
\end{equation*}
and $B=kQ/I$ with $I=\langle bdbd,dbdb,dbdc\rangle$.

Let $e=e_1+e_2$ and
$R=B/Be_3B\simeq eBe.$
Then $R$ is the self-injective Nakayama algebra  with radical fourth power zero. We regard $\modu R$ as the full subcategory of $\modu B$ consisting of modules annihilated by $e_3$.

 For $i\in\{1,2\}$ and $1\le l\le4$, let $U_{i,l}$ be the uniserial $R$-module with top $S_i$ and length $l$. Put
\[
 U=U_{2,2}\simeq dbB,
 \qquad
 \cE=\add(S_1\oplus S_2\oplus U).
\]
For a nonzero $M\in\modu R$, set $A=B[M]$ and retain the notation $\cC_M$ from Section~\ref{sec:onepoint}.

\begin{lemma}\label{lem:B-syzygies}
We have
\begin{align}
 \Om_BS_1&\simeq bB\oplus S_3,& \Om_B(bB)&\simeq S_1,\label{eq:B-syz1}\\
 \Om_BS_2&\simeq dB,& \Om_B(dB)&\simeq S_2\oplus S_3,\label{eq:B-syz2}\\
 \Om_B(bdB)&\simeq bdB,& \Om_B(dbB)&\simeq dbB\oplus S_3.\label{eq:B-syz3}
\end{align}
The syzygies of the eight indecomposable $R$-modules are
\[
\begin{array}{c|cccc}
X&U_{1,1}&U_{1,2}&U_{1,3}&U_{1,4}\\ \hline
\Om_BX&bB\oplus S_3&bdB\oplus S_3&S_2\oplus S_3^{\oplus2}&S_3^{\oplus2}
\end{array}
\]
and
\[
\begin{array}{c|cccc}
X&U_{2,1}&U_{2,2}&U_{2,3}&U_{2,4}\\ \hline
\Om_BX&dB&dbB\oplus S_3&S_1\oplus S_3&S_3.
\end{array}
\]
In particular, for every $M\in\modu R$,
\[
 \Om_BM\in\cG_n(B)\qquad(n\ge1).
\]
\end{lemma}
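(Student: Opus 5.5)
The plan is a direct computation with explicit bases of the indecomposable projective right $B$-modules. Here $e_iB$ is spanned by the nonzero paths starting at $i$, and arrows compose left to right.

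\textbf{Step 1: the projectives.} From the relations $bdbd=dbdb=dbdc=0$ one reads off:
\begin{itemize}
\item $e_3B=S_3$, which is simple projective because no arrow leaves $3$;
\item $e_1B$ has basis $e_1,b,c,bd,bdb,bdc$;
\item $e_2B$ has basis $e_2,d,db,dc,dbd$.
\end{itemize}
Right multiplication by arrows acts on these bases in the obvious way. Hence
\[
\rad e_1B=bB\oplus cB=bB\oplus S_3,\qquad \rad e_2B=dB.
\]
These are the first two syzygies $\Om_BS_1$ and $\Om_BS_2$.

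\textbf{Step 2: syzygies of $bB,dB,bdB,dbB$.} For each of these modules, write down the projective cover given by left multiplication by the generating path. For example $e_2B\to bB$ sends $x\mapsto bx$. Then compute the kernel on the bases.
\begin{itemize}
\item For $bB$: the images are $e_2\mapsto b$, $d\mapsto bd$, $db\mapsto bdb$, $dc\mapsto bdc$, $dbd\mapsto 0$. So the kernel is $k\,dbd\simeq S_1$.
\item For $dB$, with cover $e_1B\to dB$: the kernel is spanned by $bdb$ and $c$, giving $S_2\oplus S_3$.
\item The same bookkeeping gives $\Om_B(bdB)\simeq bdB$ and $\Om_B(dbB)\simeq dbB\oplus S_3$.
\end{itemize}
All of these are routine dimension counts. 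One only needs to check that each proposed kernel is closed under the arrows and that the dimensions match.

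\textbf{Step 3: the $R$-modules.} Realize $U_{i,l}$ as $e_iB/K_{i,l}$, where $K_{i,l}$ is the submodule spanned by the path $c$ (when $i=1$) together with the paths of length $\ge l$ in the two-cycle. Read off $\Om_BU_{i,l}=K_{i,l}$ directly. For instance:
\begin{itemize}
\item $K_{1,4}=k\,c\oplus k\,bdc=S_3^{\oplus2}$;
\item $K_{1,3}=k\,c\oplus k\,bdc\oplus k\,bdb\simeq S_2\oplus S_3^{\oplus2}$;
\item $K_{2,3}=k\,dc\oplus k\,dbd\simeq S_3\oplus S_1$.
\end{itemize}
The rows with $l=1,2$ reproduce Steps 1 and 2. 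The main point requiring care is identifying each $K_{i,l}$ up to isomorphism with the named modules and confirming that the decompositions are direct.

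\textbf{Step 4: membership in $\cG_n(B)$.} Since $S_3$ is projective, the computations above show:
\begin{itemize}
\item $\Om_B^2S_1\simeq S_1$ and $\Om_B^2(bB)\simeq bB$ modulo projectives;
\item $\Om_B^2S_2\simeq S_2$ and $\Om_B^2(dB)\simeq dB$ modulo projectives;
\item $bdB$ and $dbB$ are $\Om_B$-periodic modulo projectives.
\end{itemize}
Hence each of $S_1,S_2,bB,dB,bdB,dbB$ is, up to a projective summand, an $n$th syzygy for every $n$, and so lies in $\cG_n(B)$. Every entry of the two tables is a direct sum of these modules and $S_3$. Since $\cG_n(B)$ is closed under finite sums, $\Om_BM\in\cG_n(B)$ for all $M\in\modu R$ and all $n\ge1$.

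I expect the only real obstacle to be the bookkeeping in Steps 2 and 3: making sure the path bases respect all three relations, in particular that $bdc\neq0$ while $dbdc=0$.
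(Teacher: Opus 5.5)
Your approach is essentially the paper's. Like the paper, you use the projective covers $e_iB\to pB$, $q\mapsto pq$, read the kernels off the path bases, and then use stable $\Om$-periodicity of the nonprojective summands to get $\Om_BM\in\cG_n(B)$; you also fill in the $U_{i,l}$ table via $e_iB/K_{i,l}$, which the paper leaves to the reader.

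There are two slips, neither of which changes the stated isomorphism types:
\begin{enumerate}
\item The kernel of $e_1B\to dB$ is $k\{bdb,bdc\}$, not $k\{bdb,c\}$, because $dc\neq 0$ (it appears in your own basis of $e_2B$).
\item Your general description of $K_{i,l}$ must include all paths ending at $3$, namely $c$ and $bdc$ for $i=1$ and $dc$ for $i=2$. Your worked examples already do this; otherwise $e_iB/K_{i,l}$ is not annihilated by $e_3$.
\end{enumerate}
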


\begin{proof}
 For a nonzero admissible path $p$ ending at $i$, the map $e_iB\to pB$, $q\mapsto pq$, is a projective cover. Its kernel is spanned by admissible paths $q$ starting at $i$ for which $pq$ belongs to the defining ideal. For $p=b,d,bd,db$, the kernels are respectively
\[
 kdbd,\qquad k\{bdb, bdc\},\qquad k\{bd, bdb, bdc\},
 \qquad k\{db, dbd, dc\}.
\]
This yields \eqref{eq:B-syz1}--\eqref{eq:B-syz3}.
The other entries in the table are obtained by the same elementary computation, and we leave the details to the reader.  Every nonprojective summand occurring in them is periodic in the stable category by \eqref{eq:B-syz1}--\eqref{eq:B-syz3}. Hence it is a stable direct summand of an $n$th syzygy for every $n$, proving the final assertion.
\end{proof}

\begin{lemma}\label{torsionless}
Let  \(X\)  be a torsionless right \(B\)-module.

\begin{enumerate}[label=\textup{(\arabic*)}]
	\item If $x\in Xe_1$, then $xc=0$ implies $xb=0$;
	\item If $y\in Xe_2$, then $ydc=0$ implies  $ydb=0$.
\end{enumerate}
	Moreover, for every nonzero \(M\in\modu R\), we have	$\mathcal C_M\cap\modu R
	=
	\mathcal E.$

\end{lemma}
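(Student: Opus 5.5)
Parts (1) and (2) come from the fact that a torsionless module embeds into a free module. The main claim $\cC_M\cap\modu R=\cE$ then follows by combining (1)–(2) with the syzygy table of Lemma~\ref{lem:B-syzygies}.

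For (1) and (2), I would first reduce to $X=e_iB$. Since $X$ is torsionless, it embeds into some $B^n$. Both properties are statements about elements and are preserved by submodules and by direct sums, because the relevant paths act componentwise. So it suffices to check them for the indecomposable projectives $e_1B,e_2B,e_3B$. By the vertex condition only $e_1B$ matters for (1) and only $e_2B$ for (2). I would then compute with the basis of admissible paths:
\begin{itemize}
\item $e_1B$ has basis $e_1,b,c,bd,bdb,bdc,bdbd?$; here $bdbd=0$, so the basis is $e_1,b,c,bd,bdb,bdc$;
\item $e_2B$ has basis $e_2,d,db,dc,dbd$, since $dbdb=0$ and $dbdc=0$.
\end{itemize}
For $x=\sum\lambda_p p\in e_1B$, the product $xc$ is $\lambda_{e_1}c+\lambda_{bd}\,bdc$. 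These are independent nonzero paths, so $xc=0$ forces $\lambda_{e_1}=\lambda_{bd}=0$. Then $xb=\lambda_{e_1}b+\lambda_{bd}\,bdb=0$, which gives (1). For (2), take $y\in e_2B$. Then $yd=\mu_{e_2}d+\mu_{db}\,dbd$ and $ydc=\mu_{e_2}dc$, since $dbdc=0$. So $ydc=0$ gives $\mu_{e_2}=0$, whence $ydb=\mu_{db}\,dbdb=0$.

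For the inclusion $\cE\subseteq\cC_M$, I use the table in Lemma~\ref{lem:B-syzygies}. We have $S_1=\Om_B(bB)$ and $S_2\oplus S_3=\Om_B(dB)$. Both $bB$ and $dB$ are submodules of $B$, so they lie in $\cT_M$, and since $\cC_M$ is closed under summands, $S_1,S_2\in\cC_M$. Likewise $dbB\oplus S_3=\Om_B(dbB)$ with $dbB\subseteq B$, so $U\simeq dbB\in\cC_M$. Hence $\cE\subseteq\cC_M$ for every $M$.

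For the reverse inclusion, the lemma shows that every module in $\cC_M$ embeds into a projective $B$-module, so it is torsionless. Let $X\in\cC_M\cap\modu R$ be indecomposable, i.e. $X=U_{i,l}$. In an $R$-module $c$ acts as zero, since $Xe_3=0$. So (1) forces $xb=0$ for all $x\in Xe_1$, and (2) forces $ydb=0$ for all $y\in Xe_2$. Now go through the cases:
\begin{itemize}
\item $U_{1,l}$ with $l\ge2$ has top generator $x$ with $xb\ne0$, so it is excluded; only $U_{1,1}=S_1$ survives.
\item $U_{2,l}$ with $l\ge3$ has top $y$ with $ydb\neq0$, so it is excluded.
\item $U_{2,l}$ with $l\le2$: here $U_{2,2}$ has socle $S_1$ with $yd\in Xe_1$ and $(yd)b=0$, so it is allowed. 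This gives $S_2$ and $U$.
\end{itemize}
Finally, since $R$ is Nakayama, every $X\in\cC_M\cap\modu R$ is a direct sum of the $U_{i,l}$. Because $\cC_M$ is closed under summands, each summand lies in $\cC_M\cap\modu R$, and therefore in $\cE$.

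The main obstacle is the bookkeeping of the path basis and confirming that the ideal relations kill exactly the paths claimed; the argument itself is routine.
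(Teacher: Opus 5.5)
\textbf{Gap in the reduction for (1) and (2).} The reduction to indecomposable projectives is fine. The next step, ``by the vertex condition only $e_1B$ matters for (1) and only $e_2B$ for (2)'', is false, and it throws away the only nontrivial case.

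The condition $x\in Xe_1$ refers to the vertex where paths \emph{end}, not where they start. If $X\subseteq B^n$, the components of $x$ lie in $e_iBe_1$ for $i=1,2,3$. Indeed $e_3Be_1=0$, but $e_2Be_1=k\{d,dbd\}\neq0$. Likewise, for (2) the components of $y\in Xe_2$ also lie in $e_1Be_2=k\{b,bdb\}\neq0$.

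Your computation in $e_1B$ only meets elements at vertex $1$ on which right multiplication by $c$ is injective, namely $e_1Be_1=k\{e_1,bd\}$. The one place where $c$ has a nonzero kernel at vertex $1$ is $dbd\in e_2Be_1$, because of the relation $dbdc$. There, (1) holds only because of the separate relation $dbdb=0$. This case is not a formality. Torsionless modules such as $S_1\simeq dbdB$ and $U\simeq dbB$ live entirely inside $e_2B$. So in your later case analysis, excluding e.g.\ $U_{1,2}$ needs exactly (1) on the $e_2B$-components. The paper's proof of (1) explicitly lists both $(e_1B)e_1$ and $(e_2B)e_1$ and notes that the kernel of $c$ on the latter is $k\,dbd$.

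\textbf{The fix is short.}
\begin{itemize}
\item For $x=\alpha d+\beta\,dbd\in e_2Be_1$ you get $xc=\alpha\,dc$. So $xc=0$ forces $\alpha=0$, and then $xb=\beta\,dbdb=0$.
\item For $y=\alpha b+\beta\,bdb\in e_1Be_2$ you get $ydc=\alpha\,bdc$ with $bdc\neq0$. So $\alpha=0$, and then $ydb=\beta\,bdbdb=0$.
\end{itemize}
You can avoid the second check altogether: (2) follows from (1) applied to $yd\in Xe_1$.

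\textbf{The rest.} With these checks added, the remainder is correct. This includes $\cE\subseteq\cC_M$ via $\Om_B(bB)$, $\Om_B(dB)$, $\Om_B(dbB)$, and your case analysis over the $U_{i,l}$. It differs slightly from the paper, which uses only (1): (1) forces $b$ to act as zero on $X$, so $X$ is a representation of $2\xrightarrow{d}1$ and lies in $\add(S_1\oplus S_2\oplus U)$. That avoids decomposing into indecomposables and avoids (2). Your route reaches the same conclusion.
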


\begin{proof}
	Since every torsionless module embeds into a finitely generated projective
	module, it is enough to verify (1) on the indecomposable projectives.
	Using the path bases,
	\[
	(e_1B)e_1=k\{e_1,bd\},\qquad
	(e_2B)e_1=k\{d,dbd\},
	\]
	and right multiplication by \(c\) has kernel \(0\) on the first space and
	kernel \(kdbd\) on the second. The projective \(e_3B\) has no component at vertex \(1\). Thus (1) holds for every torsionless
	\(B\)-module.
	
	Similarly, (2) can be obtained.
	
	Now let \(X\in\mathcal C_M\cap\modu R\). By Lemma~\ref{lem:G2-onepoint}, \(X\) is
	torsionless. Since \(Xe_3=0\), we have \(xc=0\) for every \(x\in Xe_1\),
	hence (1) gives \(xb=0\). Thus 
	\(X\) is a representation of the one-arrow quiver
	$
	2\xrightarrow{d}1.
$
	Therefore
$
	X\in\add(S_1\oplus S_2\oplus U)=\mathcal E,
$
	and hence
$
	\mathcal C_M\cap\modu R\subseteq\mathcal E.
$	Conversely, \(bB,dB,dbB\) are submodules of projective \(B\)-modules, so
	they belong to \(\mathcal T_M\). By Lemma~\ref{lem:B-syzygies},
	\[
	\Omega_B(bB)\simeq S_1,\qquad
	\Omega_B(dB)\simeq S_2\oplus S_3,\qquad
	\Omega_B(dbB)\simeq dbB\oplus S_3.
	\]
	Hence \(S_1,S_2,U\simeq dbB\) belong to \(\mathcal C_M\). Since they are
	all \(R\)-modules, we have
$\mathcal E\subseteq\mathcal C_M\cap\modu R.$\end{proof}

\begin{lemma}\label{4.3}
	We have
$\operatorname{Findim}(B^{\mathrm{op}})=0.$
	Consequently, 
$\operatorname{Findim}(B[M]^{\mathrm{op}})
	=
	1$ for every nonzero \(M\in\operatorname{mod}R\).
\end{lemma}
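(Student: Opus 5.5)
The second assertion follows from the first by Proposition~\ref{prop:Findim-onepoint}, so the plan concentrates on showing $\Findim(B^{\op})=0$. I would reduce this to a socle condition via Bass's characterization of perfect rings of big finitistic dimension zero. For a finite-dimensional algebra $\Lambda$ it says that $\Findim(\Lambda^{\op})=0$ if and only if every simple right $\Lambda$-module is isomorphic to a minimal right ideal, that is, embeds into $\Lambda_\Lambda$. Lemma~\ref{lem:findim0-simple} proves exactly the forward direction, so the sidedness of the convention is consistent with it.

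If one wants to avoid quoting Bass, I would argue directly. Let ${}_BN$ be a left module with $\pd N=n\ge1$, and take a resolution $0\to P_n\xrightarrow{f}P_{n-1}\to\cdots$ that is minimal in its last step, so that $f(P_n)\subseteq JP_{n-1}$, where $J=\rad B$. Choose an indecomposable summand $Be_i$ of $P_n$. Choose $x\in e_iB$ with $xB$ simple, hence $xJ=0$; such an $x$ exists since $e_iB$ has nonzero socle. One must also arrange $x\in e_iBe_j$ with $S_j\simeq xB$, and it is here that the hypothesis that every simple embeds into $B_B$ is used. Then $f(x)=x f(e_i)\in xJP_{n-1}=0$ contradicts injectivity. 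This is the standard proof. The bookkeeping of which idempotent the socle element lives at is the delicate point, and it is why I would prefer to cite Bass.

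The concrete step is then to verify that $S_1$, $S_2$ and $S_3$ all occur in $\operatorname{soc}(B_B)$, using the path bases; recall that right modules compose paths left to right.
\begin{itemize}
\item The projective $e_3B=S_3$ is simple, so $S_3$ embeds.
\item The module $e_1B$ has basis $e_1,b,c,bd,bdb,bdc$, since $bdbd=0$. The element $bdb$ spans a copy of $S_2$, because $bdbd=0$ and no arrow $c$ starts at $2$. The element $bdc$ spans a copy of $S_3$.
\item The module $e_2B$ has basis $e_2,d,db,dc,dbd$, since $dbdb=dbdc=0$. The element $dbd$ spans a copy of $S_1$, because $dbd\cdot b=dbd\cdot c=0$.
\end{itemize}
Thus every simple right $B$-module embeds into $B$, and hence $\Findim(B^{\op})=0$.

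Finally, every nonzero $M\in\modu R$ is a nonzero $B$-module, so Proposition~\ref{prop:Findim-onepoint} gives $\Findim(B[M]^{\op})=1$. I expect the main obstacle to be only the correct matching of sides (left versus right, and $B$ versus $B^{\op}$) in Bass's criterion. The socle computation itself is routine.
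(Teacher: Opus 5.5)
Your proof is correct, but it argues differently from the paper. You reduce $\Findim(B^{\op})=0$ to the right Kasch property. By Bass's theorem for perfect rings, every left module of finite projective dimension is projective if and only if every simple right module embeds in $B_B$. You then check the socle directly: $S_3=e_3B$, $bdbB\simeq S_2\subseteq e_1B$ and $dbdB\simeq S_1\subseteq e_2B$. Your path bases and socle elements are right, and your sides match Lemma~\ref{lem:findim0-simple}.

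The paper instead works directly with a left module $Y$ of finite projective dimension. Put $N=\ker(P\to Y)\subseteq\rad P$. Then $e_3N=0$, since no arrow starts at $3$, so $N$ and all its syzygies are resolved by sums of $Be_1$ and $Be_2$. Applying $e(-)$ gives a finite projective resolution of $eN$ over the self-injective algebra $R$, so $eN$ is projective. But $J_R^3\,eN=0$, while nonzero projective $R$-modules have Loewy length four, so $N=0$.

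What each route buys:
\begin{itemize}
\item Yours is more conceptual. Combined with Lemma~\ref{lem:findim0-simple}, it shows the hypothesis of Proposition~\ref{prop:Findim-onepoint} is exactly the right Kasch property. It also ties this lemma to the same socle elements the paper later uses to get $\dell_AS_i=0$. The cost is an external citation.
\item The paper's argument is self-contained. It is tailored to this algebra, using the sink at vertex $3$ and the self-injectivity of $R$.
\end{itemize}

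Your optional direct argument has the idempotent on the wrong side. An element of the summand $Be_i$ of $P_n$ must satisfy $x=xe_i$. So you need a nonzero $x\in e_jBe_i$, for some $j$, with $xJ=0$. Then $xB$ is a simple quotient of $e_iB$, so $xB\simeq S_i$, and such an $x$ exists precisely because $S_i$ embeds in $B_B$. With $x\in e_iBe_j$ as you wrote, $x$ lies in $Be_j$, not in $Be_i$.

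With this correction, $f(x)=xf(e_i)\in xJP_{n-1}=0$ contradicts the injectivity of $f$. The argument also covers infinitely generated left modules, which the big finitistic dimension requires. This is because $B$ is perfect, so minimal resolutions exist and every projective is a direct sum of copies of the $Be_i$.
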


\begin{proof}Since \(B\) is finite dimensional, it is left perfect. Hence every left \(B\)-module admits a projective cover.
	Let \(Y\) be an arbitrary left \(B\)-module of finite projective
	dimension. Choose a projective cover
$
	P\twoheadrightarrow Y
$
	and put
$
	N=\ker(P\to Y).
$
	Thus \(N\subseteq \rad P\).
	Since no arrow starts at vertex \(3\), one has
$
	e_3\operatorname{rad}P=0
$
	for every projective left \(B\)-module \(P\) which implies
$e_3N=0.$
	Put \(e=e_1+e_2\), so that \(R\simeq eBe\), and let \(J_R\) denote the
	radical of \(R\). From the path bases of the indecomposable projective
	left \(B\)-modules, we have 
$
	e\operatorname{rad}(Be_i)=\operatorname{rad}(Re_i)~ (i=1,2),
$ and 
$e\operatorname{rad}(Be_3)=k\{c,dc,bdc\}.$
	All these modules are annihilated by \(J_R^3\). Hence
$J_R^3eN=0.$
Because \(e_3N=0\), the projective cover of \(N\) is  a direct sum of copies of
	\(Be_1\) and \(Be_2\). The same argument applies inductively to every
	higher syzygy of \(N\). Thus, applying the exact functor
$
	e(-):B\text{-Mod}\longrightarrow R\text{-Mod}
$
	to a finite projective resolution of \(N\) gives a finite projective
	resolution of \(eN\) over \(R\), since $
	e(Be_i)\simeq Re_i
~ (i=1,2).$
	Therefore,
	$\operatorname{pd}_R(eN)<\infty.$
Since \(R\) is self-injective, every \(R\)-module of finite projective
	dimension is projective. Hence \(eN\) is projective. But every nonzero
	projective \(R\)-module has Loewy length four, and so is not annihilated
	by \(J_R^3\).	$J_R^3eN=0$ forces
$
	eN=0.
$ Together with \(e_3N=0\) and \(1=e+e_3\), we obtain \(N=0\).
	Thus \(Y\) is projective. Consequently,
	\[
	\operatorname{Findim}(B^{\mathrm{op}})=0.
	\]
The final assertion follows immediately from Proposition \ref{prop:Findim-onepoint}.
\end{proof}

\begin{theorem}\label{thm:classification}
Let $0\ne M\in\modu R$ and $A=B[M]$. The following conditions are equivalent.
\begin{enumerate}[label=\textup{(\arabic*)}]
\item $Mbdb=0$.
\item $U_{1,4}$ is not a direct summand of $M$.
\item There is an exact sequence $0\to L\to M\to K\to0$ with $L,K\in\cC_M$.
\item For some $Q\in\proj B$, there is an exact sequence $0\to L\to M\oplus Q\to K\to0$ with $L,K\in\cC_M$.
\item $\ddell A=1$.
\end{enumerate}
Moreover,
\begin{equation}\label{eq:classification-ddell}
 \Findim(A^{\op})=1,
 \qquad
 \ddell A=
 \begin{cases}
 1,&Mbdb=0,\\
 2,&Mbdb\ne0,
 \end{cases}
\end{equation}
and
\begin{equation}\label{eq:classification-dell}
 \dell A=
 \begin{cases}
 1,&M\in\cE,\\
 2,&M\notin\cE.
 \end{cases}
\end{equation}
\end{theorem}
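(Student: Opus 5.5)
The plan is to reduce everything to Theorem~\ref{thm:ddell-one-or-two} and then carry out explicit module calculations over $R$. By Lemma~\ref{4.3}, $\Findim(B^{\op})=0$, so $\Findim(A^{\op})=1$. By Lemma~\ref{lem:B-syzygies}, $\Om_BM\in\cG_3(B)$, so $2\text{-}\dell_BM\le1$. Theorem~\ref{thm:ddell-one-or-two} therefore gives $\ddell A\in\{1,2\}$ and $\dell A\le2$. It also gives (4)$\Leftrightarrow$(5), since by Theorem~\ref{thm:onepoint-criterion} condition (4) is exactly the right-hand side of \eqref{eq:onepoint-criterion}. Hence \eqref{eq:classification-ddell} follows once (1)--(5) are shown equivalent.

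For the equivalences among (1)--(4), proceed as follows.
\begin{enumerate}[label=(\roman*)]
\item (1)$\Leftrightarrow$(2): this is immediate from the uniserial structure of $R$. The path $bdb$ starts at $1$ and has length three, so it acts nonzero on an indecomposable $U_{i,l}$ exactly when $i=1$ and $l=4$.
\item (3)$\Rightarrow$(4): take $Q=0$.
\item (2)$\Rightarrow$(3): since $\cC_M$ is closed under sums, it suffices to treat each indecomposable summand $U_{i,l}\ne U_{1,4}$. Lemma~\ref{torsionless} gives $\cE\subseteq\cC_M$, and this handles $S_1,S_2,U_{2,2}$ with $L=0$. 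For the rest use the radical filtration. For example, $0\to S_1\to U_{1,2}\to S_1\to0$, and $U_{2,3}$ is an extension of $S_1$ by $U_{2,2}$. For $U_{2,4}$, use $0\to S_2\to U_{2,4}\to U_{2,3}\to0$ if $U_{2,3}$ can be shown to lie in $\cC_M$. I expect this to require an extra element of $\cC_M$ beyond $\cE$, namely $\Om_B(bdB)=bdB$, which lies in $\cC_M$ because $bdB\in\cT_M$. Here $bdB$ has top $S_1$ and maps onto a length-$\le3$ piece, so it can serve as a sub- or quotient term. I will check case by case that each of $U_{1,3},U_{2,3},U_{2,4}$ admits a two-step filtration by objects of $\cC_M$, either from $\cE$ or of the form $bdB$ or $dbB$.
\item (4)$\Rightarrow$(1): this is the main obstacle. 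Given $0\to L\to M\oplus Q\to K\to0$ with $L,K\in\cC_M$, suppose some $x\in Me_1$ has $xbdb\ne0$. By Lemma~\ref{lem:G2-onepoint}, $L$ and $K$ are torsionless, so Lemma~\ref{torsionless} applies to both. Because $Me_3=0$ we have $(x,0)c=0$. Let $\bar x$ be the image of $(x,0)$ in $K$. Then $\bar xc=0$, so $\bar xb=0$ by Lemma~\ref{torsionless}(1), and hence $(x,0)b=(xb,0)\in L$. Set $y=xb$ in $L e_2$. Then $ydc=0$ in $M$, but in $M\oplus Q$ the component of $(xb,0)$ in $Q$ is zero, so $ydc=0$ in $L$ as well. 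Lemma~\ref{torsionless}(2) then gives $ydb=0$, that is $xbdb=0$, which is a contradiction. The delicate point is that the element $(xb,0)$ must lie in $L$ with zero $Q$-component, and I expect this to work as just described.
\end{enumerate}

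For \eqref{eq:classification-dell}, the old simples have $\dell=0$ by Proposition~\ref{prop:Findim-onepoint}, so $\dell A=\dell_AS_0\ge1$. By Lemma~\ref{lem:stable-retract} and Lemma~\ref{lem:G2-onepoint}, $\dell_AS_0\le1$ holds iff $M\in\add(P_0\cup\cC_M)$. Since $Me_0=0$, this is iff $M\in\cC_M\cap\modu R=\cE$ by Lemma~\ref{torsionless}. Otherwise $\dell A=2$, by the upper bound from Theorem~\ref{thm:ddell-one-or-two}.
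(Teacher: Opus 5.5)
The parts of your proposal outside (2)$\Rightarrow$(3) follow the paper: the reduction to Theorem~\ref{thm:ddell-one-or-two}, the equivalence (1)$\Leftrightarrow$(2), and the computation of $\dell A$ via $\cC_M\cap\modu R=\cE$. Your argument for (4)$\Rightarrow$(1) is exactly the paper's, and your ``delicate point'' is harmless because the action on $M\oplus Q$ is componentwise.

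The genuine gap is (2)$\Rightarrow$(3). Each of the three nontrivial sequences you write there is wrong, and your proposed remedy cannot work. The composition factors of the uniserial $R$-modules alternate between $S_1$ and $S_2$:
\begin{itemize}
\item $U_{1,2}=\langle\bar e_1,\bar b\rangle$ has socle $S_2$, so there is no sequence $0\to S_1\to U_{1,2}\to S_1\to0$.
\item $U_{2,3}$ has factors $S_2,S_1,S_2$, so it is not built from $S_1$ and $U_{2,2}$.
\item $U_{2,4}$ has socle $S_1=\langle dbd\rangle$, so there is no sequence $0\to S_2\to U_{2,4}\to U_{2,3}\to0$.
\end{itemize}
Your conditional route for $U_{2,4}$ also fails, because $U_{2,3}\notin\cC_M$. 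The element $\bar d\in U_{2,3}e_1$ satisfies $\bar dc=0$ but $\bar db\neq0$, which violates Lemma~\ref{torsionless}(1).

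Nor can $bdB$ help. Since $bdc\neq0$ in $B$, $bdB$ is not an $R$-module. In (3), however, $L$ is a submodule and $K$ a quotient of the $R$-module $M$. So both are automatically $R$-modules, and they must lie in $\cC_M\cap\modu R=\cE$.

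The repair is to use only $\cE=\add(S_1\oplus S_2\oplus U)$ together with radical and socle filtrations:
\begin{itemize}
\item $0\to S_2\to U_{1,2}\to S_1\to0$;
\item $0\to U\to U_{1,3}\to S_1\to0$, using $\rad U_{1,3}=\langle\bar b,\overline{bd}\rangle\simeq U$;
\item $0\to S_2\to U_{2,3}\to U\to0$;
\item $0\to U\to U_{2,4}\to U\to0$, using $\rad^2U_{2,4}=\langle db,dbd\rangle\simeq U$.
\end{itemize}
Take $L=0$ for $S_1$, $S_2$ and $U$, then form direct sums. This is precisely the table~\eqref{4.6} in the paper, and with this replacement the rest of your argument goes through.

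A minor point on the $\dell$ formula: you need $\dell_AS_0\le2$ also when $\ddell A=1$. This comes from $\Om_A^2S_0\simeq\Om_BM\in\cG_3(B)\subseteq\cG_3(A)$, which is established inside the proof of Theorem~\ref{thm:ddell-one-or-two}. Its displayed conclusion only asserts $\dell A=2$ when $\ddell A=2$.
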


\begin{proof}
The indecomposable $R$-modules are precisely the $U_{i,l}$. Among them, $U_{1,4}$ is the only module on which $bdb$ acts nontrivially, so (1) and (2) are equivalent.

Every indecomposable module other than $U_{1,4}$ has a submodule $L$ and quotient $X/L$ as follows:
\begin{equation}\label{4.6}
\begin{array}{c|ccccccc}
X&U_{1,1}&U_{1,2}&U_{1,3}&U_{2,1}&U_{2,2}&U_{2,3}&U_{2,4}\\ \hline
L&0&S_2&U&0&0&S_2&U\\
X/L&S_1&S_1&S_1&S_2&U&U&U.
\end{array}
\end{equation}
All modules in the last two rows belong to $\cC_M$ by Lemma~\ref{torsionless}. Taking direct sums proves (2)$\Rightarrow$(3), and (3)$\Rightarrow$(4) is immediate.

Suppose (4) holds, and let
\[
 0\longrightarrow L\xrightarrow{\iota}M\oplus Q\xrightarrow{\pi}K\longrightarrow0,
\]
be the corresponding sequence. Let \(u\in Me_1\), viewed as an element of \(M\oplus Q\). Since
\(M\in\modu R\), we have \(Me_3=0\), and hence
$
uc=0.
$
Therefore
$
\pi(u)c=\pi(uc)=0.
$
Moreover, \(\pi(u)\in Ke_1\). Since \(K\in\mathcal C_M\), Lemma~\ref{lem:G2-onepoint}
implies that \(K\) is torsionless. Hence 
Lemma~\ref{torsionless}(1) gives
$\pi(u)b=0,
$ and hence $
\pi(ub)=0.
$
Identifying \(L\) with \(\operatorname{Im}\iota=\ker\pi\), we have \(ub\in L\).
Since \(b\) ends at vertex \(2\), we have \(ub\in Le_2\). Furthermore,
$
(ub)dc=u(bdc)\in Me_3=0.
$
Again \(L\in\mathcal C_M\), so \(L\) is torsionless. Applying Lemma~\ref{torsionless}(2) to \(ub\in Le_2\), we obtain
$
(ub)db=0.
$
Thus
$
ubdb=0
$
for every \(u\in Me_1\). Since the path \(bdb\) starts at vertex \(1\),
it follows that
$
Mbdb=Me_1bdb=0.
$ This proves \((1)\).

The old simple modules satisfy $\dell_AS_i=0$,
indeed, $S_1\simeq dbdB\subseteq e_2B$,
$S_2\simeq bdbB\subseteq e_1B$, and $S_3=e_3B$ is projective.
Let $S_0$ be the new simple module. Its first syzygy is $M$.
By Lemma~\ref{lem:B-syzygies},
$
\Omega_A^2S_0=\Omega_BM\in\mathcal G_3(B)
\subseteq\mathcal G_3(A).
$
Consequently $\dell A\leq2$. Together with
Lemma~\ref{4.3}, the usual inequalities give
\[
1=\Findim(A^{\mathrm{op}})
\leq\ddell A\leq\dell A\leq2.
\]
Theorem~\ref{thm:onepoint-criterion} identifies condition (4)
with $\ddell_AS_0\leq1$. Since the old simples
have derived delooping level zero, this proves the equivalence
with (5) and formula~\eqref{eq:classification-ddell}.

Finally, Lemma~\ref{lem:stable-retract} gives
\[
\dell_AS_0\leq1
\quad\Longleftrightarrow\quad
M\in\mathcal G_2(A)
\quad\Longleftrightarrow\quad M\in\mathcal C_M.
\]
In the last equivalence, $M$ is supported on the old vertices,
so it has no summand isomorphic to the new projective.
 Lemma \ref{torsionless} proves formula~ \eqref{eq:classification-dell}.
\end{proof}

The same argument determines the minimum number of extension factors.
\begin{definition}\label{def:stable-length}
For a module $X$, let $\lambda^{\mathrm{st}}_{\cC_M}(X)$ be the least integer $r\ge1$ such that, for some projective $B$-module $Q$, there is a filtration
\[
 0=X_0\subseteq X_1\subseteq\cdots\subseteq X_r=X\oplus Q,
 \qquad X_j/X_{j-1}\in\cC_M.
\]
If no such filtration exists, put $\lambda^{\mathrm{st}}_{\cC_M}(X)=\infty$.
\end{definition}

\begin{corollary}\label{cor:stable-length}
For $0\ne M\in\modu R$,
\[
 \lambda^{\mathrm{st}}_{\cC_M}(M)=
 \begin{cases}
 1,&M\in\cE,\\
 2,&M\notin\cE\text{ and }Mbdb=0,\\
 3,&Mbdb\ne0.
 \end{cases}
\]
\end{corollary}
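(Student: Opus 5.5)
The plan is to prove the three cases by giving an upper bound and a matching lower bound for $\lambda^{\mathrm{st}}_{\cC_M}(M)$ in each.

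\emph{Case $\lambda=1$.} This value means $M\oplus Q\in\cC_M$ for some projective $Q$. Since $\cC_M$ is closed under summands and contains $\proj B$, this is equivalent to $M\in\cC_M$. Because $M\in\modu R$, Lemma~\ref{torsionless} turns this into $M\in\cE$.

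\emph{Case $\lambda\le 2$.} A filtration of length two is exactly a short exact sequence $0\to L\to M\oplus Q\to K\to 0$ with $L,K\in\cC_M$. By Theorem~\ref{thm:classification}, (1)$\Leftrightarrow$(4), this exists if and only if $Mbdb=0$. Hence $\lambda=2$ when $M\notin\cE$ and $Mbdb=0$, and $\lambda\ge3$ when $Mbdb\ne0$.

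\emph{Upper bound $3$ when $Mbdb\neq0$.} Write $M=U_{1,4}^{m}\oplus M'$ with $U_{1,4}$ not a summand of $M'$. Table~\eqref{4.6} gives $M'$ a two-step filtration with factors in $\cC_M$. For $U_{1,4}$ I would use the uniserial chain $0\subset U_{2,1}\subset U_{2,3}\subset U_{1,4}$, whose subquotients are $S_2$, $U$ and $S_1$; note $U_{2,3}/U_{2,1}\simeq U_{2,2}=U$ because $U_{2,3}$ has composition series $S_2,S_1,S_2$ from top to socle. All three factors lie in $\cE\subseteq\cC_M$. Since $\cC_M$ is closed under finite direct sums, I take the direct sum of these filtrations, padding the filtration of $M'$ with a zero step, to obtain a filtration of $M$ itself with three factors in $\cC_M$. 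Hence $\lambda\le3$.

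The only delicate point is that Definition~\ref{def:stable-length} allows a projective summand $Q$ and arbitrary filtrations. For the lower bounds, however, we only need that length at most two is characterized by condition (4) of Theorem~\ref{thm:classification}, whose proof handles an arbitrary $Q$. Also $\cC_M$ contains $0$ and is closed under sums, so shorter filtrations can always be padded, and the bounds are monotone.
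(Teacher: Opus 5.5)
Your proof is correct and follows the paper's argument essentially step by step. The value one comes from closure of $\cC_M$ under summands together with Lemma~\ref{torsionless}; the value two, and the exclusion of two factors when $Mbdb\ne0$, come from condition (4) of Theorem~\ref{thm:classification}. The three-factor bound uses the filtration of $U_{1,4}$ combined with table~\eqref{4.6} and padding, and your chain $U_{2,1}\subset U_{2,3}\subset U_{1,4}$ is exactly the paper's $\rad^3V\subseteq\rad V\subseteq V$. Your explicit remark that padding with zero steps makes the bounds monotone spells out a point the paper leaves implicit.
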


\begin{proof}
Since $\mathcal C_M$ is closed under direct summands,
$M\oplus Q\in\mathcal C_M$ implies $M\in\mathcal C_M$.
Thus the value is one precisely when $M\in\mathcal E$ by
Lemma~\ref{torsionless}. Theorem~\ref{thm:classification}
settles the case of two factors and excludes two factors
when $Mbdb\ne0$. It remains to construct three factors.
For $V=U_{1,4}$, take
\[
0\subseteq\rad^3V
\subseteq\rad V\subseteq V.
\]
The factors, from bottom to top, are $S_2,U,S_1$.
Combine these filtrations with
\eqref{4.6} for the remaining
indecomposable summands of $M$. Repeated terms may be inserted
to give every summand a filtration with three factors.
\end{proof}

\begin{corollary}\label{cor:family-Al}
Let  $A_l=B[U_{1,l}]$ and $A'_l=B[U_{2,l}]$~ $(1\le l\le4)$. Then we have
\[
\begin{array}{c|cccc}
 l&1&2&3&4\\ \hline
 \Findim(A_l^{\op})&1&1&1&1\\
 \ddell A_l&1&1&1&2\\
 \dell A_l&1&2&2&2
\end{array}
\]
\[
\begin{array}{c|cccc}
	l & 1 & 2 & 3 & 4\\ \hline
	\operatorname{Findim}((A'_l)^{\mathrm{op}})
	&1&1&1&1\\
	\operatorname{ddell}A'_l
	&1&1&1&1\\
	\operatorname{dell}A'_l
	&1&1&2&2
\end{array}.
\]

\end{corollary}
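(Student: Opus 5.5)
This corollary is a direct specialization of Theorem~\ref{thm:classification} to indecomposable $M$. I would read off each row of the two tables by checking two membership conditions for each of the eight uniserial modules $U_{i,l}$.

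The first row of both tables needs no work. Since each $U_{i,l}$ is a nonzero $R$-module, Lemma~\ref{4.3} gives $\Findim(A_l^{\op})=\Findim((A'_l)^{\op})=1$ for all $l$.

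For the $\ddell$ row I use \eqref{eq:classification-ddell}: $\ddell$ equals $1$ exactly when $Mbdb=0$. Equivalently, by (1)$\Leftrightarrow$(2) of Theorem~\ref{thm:classification}, this holds exactly when $U_{1,4}$ is not a summand of $M$. Among the modules $U_{i,l}$, the path $bdb$ acts nontrivially only on $U_{1,4}$.
\begin{itemize}
\item On $U_{1,4}$, the top generator $x\in Me_1$ satisfies $xbdb\neq0$, since it spans the socle.
\item On every other $U_{i,l}$, either the length is at most $3$, so a path of length $3$ acts as zero, or the top lies at vertex $2$, so $Me_1bdb$ lands below the top and exits through the radical before three steps are completed.
\end{itemize}
This gives $\ddell A_l=1,1,1,2$ and $\ddell A'_l=1$ for all $l$.

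For the $\dell$ row I use \eqref{eq:classification-dell}: $\dell$ equals $1$ exactly when $M\in\cE=\add(S_1\oplus S_2\oplus U_{2,2})$. Since each $U_{i,l}$ is indecomposable, this reduces to asking whether it is isomorphic to $S_1=U_{1,1}$, $S_2=U_{2,1}$, or $U_{2,2}$.
\begin{itemize}
\item For $A_l$, only $l=1$ qualifies, giving $\dell A_l=1,2,2,2$.
\item For $A'_l$, exactly $l=1,2$ qualify, giving $\dell A'_l=1,1,2,2$.
\end{itemize}

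No step is a real obstacle. The only point needing care is the identification $U=U_{2,2}\simeq dbB$ and the uniqueness of $U_{1,4}$ as the module with $Mbdb\neq0$. Both are already stated in the proof of Theorem~\ref{thm:classification}, so I would cite them rather than recompute.
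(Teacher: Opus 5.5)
Your proposal is correct and matches the paper's proof. The paper reads both tables off Theorem~\ref{thm:classification}, with Lemma~\ref{4.3} supplying the $\Findim$ row, by checking for each indecomposable $U_{i,l}$ whether $bdb$ acts nontrivially and whether $U_{i,l}\in\cE$. Your argument that $bdb$ kills every $U_{i,l}\not\simeq U_{1,4}$ is easiest to state as follows: either $\rad^3U_{i,l}=0$, or the top lies at vertex $2$, so $U_{i,l}e_1\subseteq\rad U_{i,l}$ and hence $U_{i,l}e_1\,bdb\subseteq\rad^4U_{i,l}=0$.
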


\begin{proof}
It follows from Theorem \ref{thm:classification}.
\end{proof}


\section*{Acknowledgments}

Hanpeng Gao is supported by  National Natural Science Foundation of China (No.12301041),
Dajun  Liu  is supported by  National Natural Science Foundation of China (No.12101003) and
the Natural Science Foundation of Anhui province (No.2108085QA07).


\end{document}